\title{Computability and Analysis,\\a Historical Approach}
\author{Vasco Brattka\inst{1,2}\thanks{Vasco Brattka is supported by the National Research Foundation of South Africa.
This article uses some historical insights that were established in \cite{AB14}.}}
             \institute{ Faculty of Computer Science, Universit\"at der Bundeswehr M\"unchen, Germany
             \and Dept.\ of Mathematics \& App.\ Maths., University of Cape Town, South Africa
           \email{Vasco.Brattka@cca-net.de}}
\date{\today}
\def\CC{{\mathcal C}}
\def\IC{{\mathbb{C}}}
\def\IN{{\mathbb{N}}}
\def\IR{{\mathbb{R}}}
\def\TO{\Longrightarrow}
\def\In{\subseteq}
\def\mto{\rightrightarrows}
\def\dom{{\rm dom}}
\def\range{{\rm range}}
\def\Tr{{\rm Tr}}
\def\WKL{\text{\rm\sffamily WKL}}
\def\IVT{\text{\rm\sffamily IVT}}
\def\BWT{\text{\rm\sffamily BWT}}
\def\BFT{\mbox{\rm\sffamily BFT}}
\def\C{\mbox{\rm\sffamily C}}
\def\MCT{\text{\rm\sffamily MCT}}
\def\FRR{\text{\rm\sffamily FRR}}
\def\BIM{\text{\rm\sffamily BIM}}
\def\MAX{\text{\rm\sffamily MAX}}
\def\Z{\text{\rm\sffamily Z}}
\def\leqT{\mathop{\leq_{\mathrm{T}}}}
\def\leqW{\mathop{\leq_{\mathrm{W}}}}
\def\equivW{\mathop{\equiv_{\mathrm{W}}}}
\DeclareMathOperator*{\bigtimes}{\vartimes}
\begin{document}

\maketitle

\begin{abstract}
The history of computability theory and and the history of analysis are surprisingly intertwined since the beginning of the twentieth century.
For one, {\'E}mil Borel discussed his ideas on computable real number functions in his introduction to measure theory. 
On the other hand, Alan Turing had computable real numbers in mind when he introduced his now famous machine model.
Here we want to focus on a particular aspect of computability and analysis, namely on computability properties
of theorems from analysis. This is a topic that emerged already in early work of Turing, Specker and other pioneers of 
computable analysis and eventually leads us to the very recent project of classifying the computational content of theorems
in the Weihrauch lattice. 
\end{abstract}

\section{Introduction}

Probably {\'E}mil Borel was the first mathematician who had an intuitive understanding of computable real number
functions and he anticipated some basic ideas of computable analysis as early as at the beginning of the 20th century. 
It was in his introduction to measure theory where he felt the need to discuss such concepts and we can find
for instance the following crucial observation in \cite{Bor12,Bor50}.

\begin{theorem}[Borel 1912]
Every computable real number function $f:\IR^n\to\IR$ is continuous.
\end{theorem}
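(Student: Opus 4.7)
The approach is to exploit the fundamental ``finite use of information'' principle: any algorithm that produces an output has read only finitely much of its input, so the output depends continuously on the input's finite initial data. Since a real number can only be accessed by an algorithm through rational approximations, this principle translates directly into ordinary $\varepsilon$-$\delta$ continuity of $f$.

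The plan is to work with the fast Cauchy representation: a sequence $(q_k)_{k\in\IN}$ in $\IQ^n$ represents $x\in\IR^n$ provided $|q_k-x|<2^{-k-1}$ for all $k$. Computability of $f$ means there is an algorithm that turns any such representation of $x$ into a representation of $f(x)$. Fix $x\in\IR^n$ and $\varepsilon>0$, choose any representation $(q_k)$ of $x$, and run the algorithm until it outputs some rational $r$ with $|r-f(x)|<\varepsilon/2$. Since this output is produced after finitely many steps, the computation has inspected at most an initial segment $q_0,\ldots,q_N$ for some $N\in\IN$.

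Set $\delta:=2^{-N-1}$ and let $y\in\IR^n$ with $|y-x|<\delta$. Then for each $k\le N$,
\[
|q_k-y|\le|q_k-x|+|x-y|<2^{-k-1}+2^{-N-1}\le 2^{-k},
\]
so the prefix $q_0,\ldots,q_N$ can be completed to a valid representation $(q_k')$ of $y$ (for instance by continuing with any fast Cauchy sequence for $y$ from index $N+1$ onwards). Fed with $(q_k')$, the algorithm makes exactly the same decisions up to producing the same rational $r$, so $|r-f(y)|<\varepsilon/2$, and hence $|f(x)-f(y)|<\varepsilon$.

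The main obstacle is making rigorous the clause ``the algorithm makes the same decisions''. This requires fixing a precise computational model, say an oracle Turing machine that queries the input sequence bit by bit, together with the observation that any halting computation reads only a finite portion of its oracle. A secondary subtlety is that the prefix replacement above must preserve validity of the representation; this is automatic for the fast Cauchy representation because its $k$-th condition depends only on $q_k$ and $x$, and this is exactly the reason to prefer it to, say, decimal expansions, where carry propagation would obstruct the argument and force one to work with multi-valued representations.
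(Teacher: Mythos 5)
The paper itself offers no proof of this statement: it is quoted as a historical result of Borel, with the remark that Borel's notion of computability was informal, so there is nothing in the text to compare your argument against. Your proof is the standard modern one (the ``finite use'' principle applied to a Cauchy representation, as in Weihrauch's book), and the overall strategy is correct. However, there is one concrete quantitative slip that breaks a step as literally written. You define a name of $z$ to be a sequence $(q_k)$ with $|q_k-z|<2^{-k-1}$, but for $y$ with $|y-x|<2^{-N-1}$ you only establish $|q_k-y|<2^{-k-1}+2^{-N-1}\le 2^{-k}$ for $k\le N$. That is weaker than the condition $|q_k-y|<2^{-k-1}$ your own definition requires, so the completed sequence $(q_k')$ is \emph{not} a valid representation of $y$ in your sense, and you cannot conclude that the machine, fed $(q_k')$, outputs a name of $f(y)$. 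Note that no choice of $\delta>0$ repairs this for the term $k=0$, since $|q_0-x|$ may already be arbitrarily close to the allowed bound $2^{-1}$; the problem is that your representation has no slack to absorb the perturbation from $x$ to $y$.

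The standard repair is to decouple the two precisions: define a name of $z$ to be any $(q_k)$ with $|q_k-z|\le 2^{-k}$ (this is what computability of $f$ should be taken relative to), but choose the particular name of $x$ that you feed the machine to be twice as accurate, $|q_k-x|<2^{-k-1}$. Then your estimate $|q_k-y|<2^{-k}$ for $k\le N$ shows precisely that $q_0,\dots,q_N$ is a legal prefix of a name of $y$, and the rest of your argument goes through: the machine reads only that prefix before committing to an approximation of $f(x)$ to within $\varepsilon/2$, hence produces the same approximation on the name of $y$, giving $|f(x)-f(y)|<\varepsilon$. With that adjustment (and the fixed oracle-machine model you already acknowledge is needed to make ``same decisions'' precise), the proof is complete. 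One could add, as a point of comparison with the paper's framing, that this argument presupposes the modern representation-based definition of computability, which is a precisification of, but not identical to, what Borel wrote in 1912.
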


Strictly speaking, Borel's definition of a computable real number function was a slight variant of the modern definition 
(see \cite{AB14} for details and translations)
and his definition was informal in the sense that no rigorous notion of computability or of an algorithm was available at
Borel's time. 

It was only Alan Turing who introduced such a notion with the help of his now famous machine model \cite{Tur37,Tur38}.
Interestingly, also Turing was primarily interested in computable real numbers (hence the title of his paper!) 
and not so much in functions and sets on natural numbers that are the main objects of study in modern computability theory. 
Turing's definition of a computable real number function is also a slight variant of the modern definition (see \cite{AB14} for details).

We conclude that computability theory was intertwined with analysis since its early years and here we want to focus on a particular
aspect of this story that is related to computability properties of theorems in analysis, which are one subject of interest
in modern computable analysis \cite{PR89,Ko91,Wei00,BHW08}.

\section{Some Theorems from Real Analysis}

In his early work \cite{Tur37} Turing already implicitly discussed the computational content of some 
classical theorems from analysis. Some of his rather informal observations have been made precise later 
by Specker and others \cite{AB14}. Ernst Specker was probably the first one who actually gave a definition of computable real number functions
that is equivalent to the modern one \cite{Spe49,Spe59}. 
The following theorem is one of those theorems that are implicitly discussed by Turing in \cite{Tur37}.

\begin{theorem}[Monotone Convergence Theorem]
\label{thm:MCT}
Every monotone increasing and bounded sequence of real numbers $(x_n)_n$ 
has a least upper bound $\sup_{n\in\IN}x_n$.
\end{theorem}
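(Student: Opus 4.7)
The plan is to reduce the statement to the completeness (least upper bound) axiom of $\IR$. First I would form the set $S := \{x_n : n \in \IN\}$, which is nonempty (it contains $x_0$) and bounded above by the standing hypothesis on $(x_n)_n$. Completeness of $\IR$ then yields a least upper bound $s := \sup S \in \IR$, and it remains only to observe that this $s$ serves equally as the supremum of the sequence: by construction $x_n \leq s$ for every $n$, and any $t < s$ fails to majorise $S$, hence also fails to majorise the sequence. Monotonicity of $(x_n)_n$ is not actually needed for the existence of the supremum, but it is what guarantees that $(x_n)_n$ converges to $s$, which is the form in which the theorem is usually applied.

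If one prefers not to invoke completeness in axiomatic form, the same $s$ can be produced as the limit of an explicitly constructed Cauchy sequence. Fixing any upper bound $M$, I would recursively bisect the interval $[x_0, M]$, selecting at each step the upper half if its left endpoint is strictly less than some $x_n$ and the lower half otherwise. The sequence of midpoints so obtained is Cauchy with modulus $2^{-k}$; its limit $s$ dominates every $x_n$ and is approximated from below by them, so it coincides with $\sup_{n \in \IN} x_n$.

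The main obstacle, from the perspective of the rest of this paper, is not that the classical argument is difficult but that neither route is \emph{effective}. The completeness axiom is invoked without any witnessing procedure, and the bisection alternative requires, at each stage, deciding whether some $x_n$ exceeds a given rational threshold, which for a computable sequence is only a semi-decidable question. This is precisely the non-computability phenomenon that will be analysed in the subsequent sections, where the theorem will be located in the Weihrauch lattice.
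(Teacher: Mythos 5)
Your argument is correct and entirely standard: the paper states the Monotone Convergence Theorem as classical background and gives no proof of its own, and your route via the least upper bound axiom (with the bisection construction as an alternative) is exactly the canonical one. Your closing remark about the non-effectivity of both routes accurately anticipates Proposition~\ref{prop:MCT} and the later classification of $\MCT$ in Theorem~\ref{thm:lim}.
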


What Turing observed (without proof) is that for a computable sequence $(x_n)_n$ of this type
the least upper bound is not necessarily computable. A rigorous proof of this result was presented
only ten years later by Specker \cite{Spe49}.

\begin{proposition}[Turing 1937, Specker 1949]
\label{prop:MCT}
There is a computable monotone increasing and bounded sequence $(x_n)_n$ of real numbers
such that $x=\sup_{n\in\IN}x_n$ is not computable.
\end{proposition}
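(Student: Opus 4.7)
The plan is to build a \emph{Specker sequence}. Fix a computably enumerable but non-computable set $A \In \IN$ (for instance, the halting problem), and let $f:\IN\to\IN$ be a computable injective enumeration of $A$. Define
\[
  x_n := \sum_{k=0}^{n} 2^{-f(k)-1}.
\]
Then $(x_n)_n$ is a computable sequence of rationals (each $x_n$ is a finite sum of computable dyadic terms), it is monotone increasing since we add a positive quantity at every step, and it is bounded above by $\sum_{j=0}^\infty 2^{-j-1}=1$ because injectivity of $f$ guarantees that all exponents $f(k)+1$ are distinct. So $x := \sup_{n\in\IN}x_n$ exists and lies in $[0,1]$.

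The main work is to show that $x$ cannot be computable. I argue by contradiction: assuming $x$ is computable, I would decide $A$ as follows. Given $m\in\IN$, compute a rational $q$ with $|q-x|<2^{-m-2}$, and simultaneously run the enumeration of $A$, producing the partial sums $s_n=x_n$ one by one. Wait for the first of the following two events to occur:
\begin{enumerate}
\item[(a)] some $n$ with $f(n)=m$ appears; output ``$m\in A$'';
\item[(b)] $s_n>q-2^{-m-2}$; output ``$m\notin A$''.
\end{enumerate}
Event (b) is effective because $s_n$ is rational and $q$ is given. When (b) triggers we have $x-s_n\le (x-q)+(q-s_n)<2^{-m-2}+2^{-m-2}=2^{-m-1}$, so the tail $\sum_{k>n}2^{-f(k)-1}$ is strictly below $2^{-m-1}$; but if $f(k)=m$ held for some $k>n$, that single term alone would contribute $2^{-m-1}$ to the tail, contradiction. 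Hence at stage (b) the value $m$ cannot appear later, so the output is correct. Conversely, if $m\in A$ then $f(k)=m$ for a unique $k$ and (a) fires at the latest by stage $k$; if $m\notin A$ then $s_n\to x$ forces (b) to fire eventually. Thus the procedure halts on every input and decides $A$, contradicting non-computability of $A$.

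The conceptually delicate point is the quantitative argument for case (b): one must choose the precision of $q$ so that from a \emph{two-sided} approximation to $x$ one can certify a lower bound on $x-s_n$ failing, and then use injectivity of $f$ to convert ``small tail'' into ``the specific exponent $m$ never occurs later''. Everything else is bookkeeping: verifying that the map $n\mapsto x_n$ is a computable sequence of reals (immediate, since each $x_n$ is a computable rational with a computable description in $n$) and that monotonicity and boundedness hold by construction.
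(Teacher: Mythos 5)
Your construction is exactly the Specker sequence the paper refers to: a computable injective enumeration of a c.e.\ non-computable set $A$ with supremum $\sum_{m\in A}2^{-m-1}$, and your reduction showing that a computable supremum would decide $A$ is carried out correctly (including the key tail estimate in case (b)). This matches the paper's approach, which cites Specker's construction from the halting problem with limit $\sum_{i\in K}2^{-i}$ without giving further details.
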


Specker used (an enumeration of) the halting problem $K\In\IN$ to construct a corresponding sequence $(x_n)_n$
and such sequences are nowadays called {\em Specker sequences}. Then the corresponding non-computable least
upper bound is 
$x=\sum_{i\in K}2^{-i}$.
While the Monotone Convergence Theorem is an example of a non-computable theorem, 
Turing also discusses a (special case) of the Intermediate Value Theorem \cite{Tur37}, which is somewhat better behaved.

\begin{theorem}[Intermediate Value Theorem]
\label{thm:IVT}
Every continuous function\linebreak 
$f:[0,1]\to\IR$ with $f(0)\cdot f(1)<0$ has a zero $x\in[0,1]$.
\end{theorem}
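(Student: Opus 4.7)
The plan is to prove Theorem~\ref{thm:IVT} by the classical method of interval bisection. Without loss of generality I may assume $f(0)<0<f(1)$, since the hypothesis $f(0)\cdot f(1)<0$ forces one of $f(0),f(1)$ to be strictly negative and the other strictly positive, and replacing $f$ by $-f$ handles the opposite case.

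Starting from $[a_0,b_0]:=[0,1]$, I would construct inductively a nested sequence of closed intervals $[a_n,b_n]\In[0,1]$ satisfying the invariants $f(a_n)\leq 0\leq f(b_n)$ and $b_n-a_n=2^{-n}$. Given $[a_n,b_n]$, set $c_n:=(a_n+b_n)/2$ and define $[a_{n+1},b_{n+1}]:=[c_n,b_n]$ if $f(c_n)\leq 0$, and $[a_{n+1},b_{n+1}]:=[a_n,c_n]$ otherwise. In both subcases the sign condition at the two endpoints is preserved and the length is halved, so the invariants propagate.

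Since $(a_n)_n$ is monotone increasing and bounded above by $1$, $(b_n)_n$ is monotone decreasing and bounded below by $0$, and $b_n-a_n=2^{-n}\to 0$, the two sequences share a common limit $x\in[0,1]$. By continuity of $f$ at $x$ we have $f(a_n)\to f(x)$ and $f(b_n)\to f(x)$, and passing to the limit in $f(a_n)\leq 0$ and $f(b_n)\geq 0$ yields $f(x)\leq 0\leq f(x)$, hence $f(x)=0$.

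Classically no serious obstacle arises; the only delicate point is merely the bookkeeping that the invariant $f(a_n)\leq 0\leq f(b_n)$ is maintained through both branches of the case distinction. From the computability perspective that is really the focus of the paper, however, this very test ``is $f(c_n)\leq 0$?'' is precisely what prevents the bisection from being uniformly effective in $f$, and the resulting non-trivial computational content of zero finding is exactly what the subsequent discussion is set up to analyse.
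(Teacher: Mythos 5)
Your bisection argument is correct: the sign invariant $f(a_n)\leq 0\leq f(b_n)$ is set up and propagated properly through both branches, and the passage to the common limit via continuity is standard. The paper states this classical theorem without giving any proof (it is a survey concerned with the theorem's computational content), so there is no proof of record to compare against; your argument is the textbook one and suffices. Your closing observation that the undecidability of the test ``$f(c_n)\leq 0$'' is what blocks uniform effectivity is exactly the point the paper addresses when it notes that a rigorous proof of the \emph{computable} version (Proposition~\ref{prop:IVT}) uses the trisection method rather than bisection.
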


And then Turing's observation, which was stated for the general case by Specker \cite{Spe59}
could be phrased in modern terms as follows.

\begin{proposition}[Turing 1937, Specker 1959]
\label{prop:IVT}
Every computable function $f:[0,1]\to\IR$ with $f(0)\cdot f(1)<0$ has a computable zero $x\in[0,1]$.
\end{proposition}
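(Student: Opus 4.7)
The plan is to compute a zero of $f$ as the limit of a computable Cauchy sequence of rationals, using a trisection procedure driven by parallel sign-semi-decisions. Plain bisection cannot be made effective since the sign of $f$ at a rational midpoint is not computably decidable in general; but for trisection it will suffice to semi-decide signs at two candidate points in parallel.

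First, since $f$ is computable and $f(0)\cdot f(1)<0$, both $f(0)$ and $f(1)$ are nonzero, and we can approximate each to enough precision to decide its sign in finitely many steps; assume $f(0)<0<f(1)$ and set $[a_0,b_0]:=[0,1]$. Inductively, given rationals $a_n<b_n$ with computationally witnessed signs $f(a_n)<0<f(b_n)$, we put $L:=b_n-a_n$, $c_1:=a_n+L/3$, $c_2:=a_n+2L/3$, and launch four parallel searches aiming to verify $f(c_1)<0$, $f(c_1)>0$, $f(c_2)<0$, and $f(c_2)>0$ respectively, by computing ever more precise rational approximations of $f(c_1)$ and $f(c_2)$. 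The first search to succeed determines $[a_{n+1},b_{n+1}]$: for instance $f(c_2)>0$ gives $[a_n,c_2]$ and $f(c_1)<0$ gives $[c_1,b_n]$, both of length $2L/3$; $f(c_1)>0$ yields $[a_n,c_1]$ and $f(c_2)<0$ yields $[c_2,b_n]$, of length $L/3$. In every case $b_{n+1}-a_{n+1}\leq\tfrac{2}{3}L$ and the new sign condition is again witnessed.

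The main obstacle will be to ensure that at least one of these parallel searches actually terminates at each stage. This is automatic unless $f(c_1)=f(c_2)=0$, a degenerate case we plan to dispose of by dovetailing an additional search over the remaining rationals in $(a_n,b_n)$, looking for one whose value under $f$ has a definite sign and supports a factor-$2/3$ shrinkage on the appropriate side; since $f(a_n)<0$, the function $f$ is not identically zero on $[a_n,b_n]$, so such a rational eventually shows up by continuity and density.

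Iterating produces a computable Cauchy sequence $(a_n)$ of rationals with effective convergence rate $(2/3)^n$, so its limit $x\in[0,1]$ is a computable real. By continuity and the sign bounds, $f(x)=\lim_n f(a_n)\leq 0\leq\lim_n f(b_n)=f(x)$, whence $f(x)=0$, as required.
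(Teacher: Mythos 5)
You have the right method---this is exactly the trisection argument the paper attributes to \cite{Wei00}---and everything up to the degenerate case is fine. The gap is in your handling of the case $f(c_1)=f(c_2)=0$. Your fallback search looks for a rational $q\in(a_n,b_n)$ whose sign is semi-decidable \emph{and} which yields a factor-$2/3$ shrinkage, i.e.\ $q\in[c_1,b_n)$ with $f(q)<0$ or $q\in(a_n,c_2]$ with $f(q)>0$. You justify termination by noting that $f$ is not identically zero on $[a_n,b_n]$; but that only guarantees \emph{some} sign-definite rational, not one in the required position. Take $f$ with $f(a_n)<0$, $f(b_n)>0$ and $f\equiv 0$ on $[a_n+\varepsilon,\,b_n-\varepsilon]$ for small $\varepsilon$: every sign-definite point lies within $\varepsilon$ of an endpoint, so none supports a $2/3$ shrinkage, all of your parallel searches diverge, and the algorithm stalls at stage $n$ without producing a zero.

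The standard repair is a non-uniform case distinction, which is legitimate here because the claim is only that \emph{some} computable zero exists (and some non-uniformity is unavoidable, since the paper notes that $\IVT$ as a uniform problem is not even continuous). Case 1: $f$ vanishes identically on some nondegenerate subinterval of $[0,1]$; then $f$ has a rational, hence computable, zero and you are done. Case 2: $\{x:f(x)\neq0\}$ is dense; then restrict your fallback search to rationals in the \emph{middle third} $(c_1,c_2)$---by the case hypothesis a sign-definite rational exists there, it is eventually found, and whichever sign it has yields an interval of length at most $2L/3$ with the sign invariant preserved. (Equivalently: if your algorithm ever stalls, then $f(c_1)=0$ with $c_1$ rational, so a computable zero exists anyway.) With that amendment your argument is complete and matches the proof the paper points to.
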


A rigorous proof could utilize the trisection method, a constructive variant of the well-known bisection method
and can be found in \cite{Wei00}.
Hence, while the Monotone Convergence Theorem does not hold computably, the 
Intermediate Value Theorem does hold computably, at least in a {\em non uniform} sense.
It was claimed by Specker \cite{Spe59} (without proof) and later proved by Pour-El and Richards \cite{PR89}
that this situation changes if one considers a sequential version of the Intermediate Value Theorem.

\begin{proposition}[Specker 1959, Pour-El and Richards 1989]
\label{prop:IVT2}
There exists a computable sequence $(f_n)_n$ of computable functions $f_n:[0,1]\to\IR$ with\linebreak
$f_n(0)\cdot f_n(1)<0$ for all $n\in\IN$
and such that there is no computable sequence $(x_n)_n$ of real numbers $x_n\in[0,1]$ with
$f_n(x_n)=0$.
\end{proposition}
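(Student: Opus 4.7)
My plan is to construct $(f_n)_n$ by a diagonal argument against every uniformly computable sequence of reals in $[0,1]$. Fix an effective enumeration $(\phi_e)_e$ of partial computable functions representing computable reals via Cauchy sequences with the standard rate $2^{-k}$, and let $x_e$ denote the real produced by $\phi_e$ on input $e$ (when well defined). Any uniformly computable sequence of zeros of $(f_n)_n$ would be represented by some such enumeration, so it suffices to ensure, for every index $e$, that $x_e$ is not a zero of $f_e$.

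Each $f_e$ would be defined as the pointwise limit of a computable sequence $f_e^{(t)}$ of piecewise linear approximations. The base $f_e^{(0)}$ is the fixed piecewise linear function with vertices $(0,-1),(1/3,0),(2/3,0),(1,1)$, whose zero set is $[1/3,2/3]$. We simulate $\phi_e$ stage by stage; the first stage $t_0$ at which $\phi_e$ has committed to a rational approximant $q$ with guaranteed error $\leq 2^{-t_0}\leq 1/12$ triggers a single perturbation, consisting of a triangular positive bump of sup-norm $2^{-t_0}$, centre $q$, and half-width $2\cdot 2^{-t_0}$, added to $f_e^{(0)}$. For stages $t<t_0$ (or if no such $t_0$ ever arises) we set $f_e^{(t)}:=f_e^{(0)}$; beyond $t_0$ the approximation is frozen at the perturbed function. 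Since the unique perturbation has sup-norm $2^{-t_0}$, we obtain $\|f_e^{(t)}-f_e\|_\infty\leq 2^{-t}$ for all $t$, giving uniform computability of $(f_e)_e$; the sign condition $f_e(0)\cdot f_e(1)<0$ is preserved because the bump's support lies strictly inside $(1/3,2/3)$; and $x_e$ lies strictly inside that support, so $f_e(x_e)$ is strictly positive and $x_e$ is not a zero of $f_e$.

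The main obstacle is ensuring that the trigger for the perturbation is itself uniformly computable in $e$ and $t$, so that the resulting $(f_e)_e$ really is a computable sequence. The standard Cauchy-representation conventions for computable sequences of reals make this workable: the condition ``$\phi_e$ has produced a sufficiently accurate rational by stage $t_0$'' is a primitive recursive predicate on $\phi_e$'s outputs up to stage $t_0$, and the parameters of the bump depend only on $t_0$ and $q$. The delicate quantitative point is the choice of half-width $2\cdot 2^{-t_0}$, strictly larger than the approximation error $2^{-t_0}$ inherent in $q$: this guarantees that $x_e$ lies strictly inside the bump's support, where the bump's value is bounded below by a positive quantity, so that $f_e(x_e)>0$ robustly despite our only having an approximation of $x_e$ at the time of construction.
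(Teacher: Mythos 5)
Your overall strategy---a direct diagonalization against the diagonal real $x_e$ computed by the $e$-th candidate algorithm, implemented by a small computable perturbation of a fixed piecewise linear function with zero plateau $[1/3,2/3]$---is a legitimate alternative to the route the paper points to, namely reducing to a pair of computably inseparable c.e.\ sets in the style of Pour-El and Richards. However, the perturbation you chose does not do what you claim. The key assertion ``$x_e$ lies strictly inside the bump's support, so $f_e(x_e)$ is strictly positive'' is only valid when $f_e^{(0)}(x_e)\geq 0$, i.e.\ when $x_e\geq 1/3$. Nothing forces $q$, hence $x_e$, into the plateau: on $[0,1/3)$ the base function equals $3x-1<0$, and adding a \emph{positive} bump there can create a brand new zero exactly at $x_e$. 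Concretely, take $2^{-t_0}=1/16$, $q=1/3$ and $x_e=53/168$. Then $|x_e-q|=1/56\leq 2^{-t_0}$, so $q$ is a legitimate first approximation of $x_e$; the bump value at $x_e$ is $1/16-(1/56)/2=3/56$, while $3x_e-1=-3/56$, so $f_e(x_e)=0$. An adversary $\phi_e$ can output $q=1/3$ at the trigger stage and then converge to $53/168$, defeating requirement $e$. (Your justification of the sign condition is off for the same reason---the bump's support need not lie in $(1/3,2/3)$---although that conclusion happens to survive because the bump height $\leq 1/12$ cannot overcome $|f_e^{(0)}(0)|=|f_e^{(0)}(1)|=1$.)

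The gap is repairable, but with a different perturbation. Once $q$ is known, tilt the whole plateau instead of bumping it: if $q\geq 1/2$, raise the plateau to the constant $2^{-t_0}$, so that $f_e$ has a unique zero lying in $(0,1/3)$ while $x_e\geq q-2^{-t_0}\geq 1/2-1/12>1/3$; if $q<1/2$, lower it to $-2^{-t_0}$, pushing the unique zero into $(2/3,1)$ while $x_e\leq q+2^{-t_0}<1/2+1/12<2/3$. Either way $f_e(x_e)\neq 0$, the endpoint signs are untouched, and the modification has sup-norm $2^{-t_0}$, so uniform computability of $(f_e)_e$ is preserved. You should also decouple the \emph{stage} at which the trigger fires from the \emph{accuracy} of $q$: as written, demanding that the guaranteed error at stage $t_0$ be $\leq 2^{-t_0}$ means the trigger may never fire for a perfectly valid but slowly converging name (e.g.\ if every computation $\phi_e(e,k)$ halts only after more than $k$ steps), in which case $f_e=f_e^{(0)}$ and $x_e$ may well be a zero. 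Use the elapsed number of computation steps to bound the height of the modification and the accuracy index of $q$ only to control its location; with the tilt construction the latter plays no quantitative role beyond the threshold $2^{-k}\leq 1/12$.
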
 

For their proof Pour-El and Richards used two c.e.\ sets that are computably inseparable.
Their result indicates that the Intermediate Value Theorem does not hold computably in a {\em uniform} sense.
In fact, it is known that the Intermediate Value Theorem is not computable in the following fully uniform sense:
namely, there is no algorithm that given a program for $f:[0,1]\to\IR$ with $f(0)\cdot f(1)<0$ produces
a zero of $f$. Nowadays, we can express this as follows with a partial multi-valued map \cite{Wei00}.

\begin{proposition}[Weihrauch 2000]
$\IVT:\In\CC[0,1]\mto[0,1],f\mapsto f^{-1}\{0\}$
with $\dom(\IVT)=\{f\in\CC[0,1]:f(0)\cdot f(1)<0\}$ is not computable.
\end{proposition}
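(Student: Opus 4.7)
The plan is to derive the non-computability of $\IVT$ as a uniform consequence of the sequential counterexample in Proposition~\ref{prop:IVT2}. Suppose, for contradiction, that $\IVT$ is computable. By definition, this means there is a computable realizer $F$ which, given a name of any $f\in\dom(\IVT)$, produces a name of some zero $x\in f^{-1}\{0\}$. The key observation is that such a realizer acts uniformly on names: plugging in names of different inputs yields names of corresponding outputs, without any dependence beyond the input name itself.

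Next, I would invoke the sequence $(f_n)_n$ provided by Proposition~\ref{prop:IVT2}. This is a computable sequence of computable functions $f_n\in\CC[0,1]$ with $f_n(0)\cdot f_n(1)<0$, so each $f_n$ lies in $\dom(\IVT)$. Using a standard representation of $\CC[0,1]$, a computable sequence of computable functions amounts to a single computable object from which a name of each individual $f_n$ can be computed uniformly in $n$. I would then apply the realizer $F$ uniformly: from the name of $f_n$ compute a name of some $x_n\in f_n^{-1}\{0\}$, so that $(x_n)_n$ becomes a computable sequence of real numbers in $[0,1]$ satisfying $f_n(x_n)=0$ for all $n\in\IN$.

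This, however, directly contradicts Proposition~\ref{prop:IVT2}, which asserts precisely that no such computable sequence $(x_n)_n$ of zeros exists. Hence the assumption that $\IVT$ is computable must fail.

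The step I expect to require the most care is the uniformity claim in the second paragraph: spelling out that computability of a multi-valued operator, in the sense of Weihrauch, really does yield a computable sequence of selected zeros when applied to a computable sequence of inputs. Once this is justified by appealing to the standard representation of $\CC[0,1]$ and to the definition of computability of partial multi-valued maps between represented spaces, the remainder of the argument is an immediate reduction to the previous proposition.
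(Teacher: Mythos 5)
Your argument is correct: computability of $\IVT$ in the sense of multi-valued maps between represented spaces does mean the existence of a single computable realizer acting on names, and applying it uniformly to a computable sequence of names of the $f_n$ from Proposition~\ref{prop:IVT2} would yield a computable sequence $(x_n)_n$ of zeros, contradicting that proposition. The uniformity step you flag is indeed the only point needing care, and it goes through with the standard representation of $\CC[0,1]$ and of sequences. However, this is a genuinely different route from the one the paper points to. The paper gives no proof but cites Weihrauch's direct argument and immediately remarks that $\IVT$ ``is not even continuous'': the standard proof exhibits a family of piecewise linear functions possessing a whole interval of near-zeros and shows that arbitrarily small perturbations (pushing that plateau slightly up or down) force the zero to jump between the two ends of the interval, so that no continuous --- hence no computable --- realizer can exist. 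That argument is elementary, self-contained, and yields the strictly stronger topological conclusion of discontinuity, which is what connects $\IVT$ to the failure of constructive provability. Your derivation buys brevity by reusing Proposition~\ref{prop:IVT2}, but it inherits the full strength of the Pour-El--Richards construction (computably inseparable c.e.\ sets) as a black box, and it establishes only non-computability, not non-continuity; in particular it would not survive relativization to an oracle in the way the continuity argument does.
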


For general represented spaces $X,Y$ we denote by $\CC(X,Y)$ the space of continuous functions $f:X\to Y$ endowed with a suitable representation \cite{Wei00}
(and the compact open topology) and we use the abbreviation $\CC(X):=\CC(X,\IR)$.
In fact, $\IVT$ is not even continuous and this observation is related to the fact that the Intermediate Value Theorem has no
constructive proof \cite{Bee85}. 

Another theorem discussed by Specker \cite{Spe59} is the Theorem of the Maximum.

\begin{theorem}[Theorem of the Maximum]
\label{thm:MAX}
For every continuous function $f:[0,1]\to\IR$ there exists a point $x\in[0,1]$
such that $f(x)=\max f([0,1])$.
\end{theorem}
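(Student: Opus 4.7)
The plan is to deduce attainment of the maximum from the compactness of $[0,1]$ via the Bolzano--Weierstrass property. First I would establish that $f$ is bounded on $[0,1]$: assuming the contrary yields a sequence $(x_n)_n$ in $[0,1]$ with $|f(x_n)|\to\infty$, from which Bolzano--Weierstrass extracts a subsequence $x_{n_k}\to x\in[0,1]$; continuity of $f$ at $x$ then forces $f(x_{n_k})\to f(x)\in\IR$, contradicting unboundedness. Consequently the supremum $M:=\sup f([0,1])\in\IR$ exists as a real number.

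Next I would exhibit a point where $M$ is attained. By the defining property of the supremum there is a sequence $(y_n)_n$ in $[0,1]$ with $f(y_n)\to M$. Applying Bolzano--Weierstrass once more, I extract a convergent subsequence $y_{n_k}\to x^*\in[0,1]$. Sequential continuity of $f$ gives $f(y_{n_k})\to f(x^*)$, while the subsequence of $(f(y_n))_n$ still converges to $M$; hence $f(x^*)=M=\max f([0,1])$, as required.

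The key ingredients are therefore sequential compactness of $[0,1]$ together with the sequential continuity of $f$. The only real obstacle is that both appeals to Bolzano--Weierstrass are non-constructive, a feature that foreshadows the non-computability phenomena investigated elsewhere in this paper (in the spirit of Proposition~\ref{prop:MCT}); classically, however, these two steps are entirely routine. A more topological variant would bypass sequences altogether by arguing that the continuous image $f([0,1])$ of the compact set $[0,1]$ is a compact, hence closed and bounded, subset of $\IR$ and thus contains its supremum; this is slightly shorter but rests on the same underlying compactness principle.
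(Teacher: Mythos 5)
Your proof is correct: this is the standard extreme value theorem argument (boundedness via Bolzano--Weierstrass, then a maximizing sequence and sequential compactness again), and the paper itself states Theorem~\ref{thm:MAX} as a classical fact without proof, so there is nothing to diverge from. Your closing remark that both appeals to Bolzano--Weierstrass are non-constructive is exactly the point the paper goes on to exploit in Proposition~\ref{prop:MAX} and Theorem~\ref{thm:MAX-WKL}.
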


Grzegorczyk \cite{Grz55} raised the question whether every computable function\linebreak
$f:[0,1]\to\IR$
attains its maximum at a computable point. This question was answered in the negative
by Lacombe \cite{Lac55d} (without proof) and later independent proofs were provided by Lacombe \cite[Theorem~VI and VII]{Lac57}
and Specker \cite{Spe59}.

\begin{proposition}[Lacombe 1957, Specker 1959]
\label{prop:MAX}
There exists a computable function $f:[0,1]\to\IR$ such that there is no computable $x\in[0,1]$
with $f(x)=\max f([0,1])$.
\end{proposition}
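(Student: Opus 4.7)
The plan is to import the non-computability from the halting problem $K$, following Specker's approach to Proposition~\ref{prop:MCT}. Let $(x_n)_n$ be a computable monotone increasing sequence of rationals in $(0,1)$ with non-computable left-c.e.\ supremum $x^* := \sup_n x_n$; for instance, the partial sums $x_n := \sum_{i \in K_n} 2^{-i-1}$, where $K_n$ is a computable enumeration of $K$. The target is a computable $f \in \CC[0,1]$ whose unique global maximizer is $x^*$, so that any computable maximizer would have to equal $x^*$, contradicting its non-computability.

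The function $f$ would be built as a uniformly effective uniform limit $f = \lim_s f_s$ of piecewise-linear stage functions $f_s$. At each stage $s$, I would add a triangular bump of height at most $2^{-s}$ whose peak lies near $x_s$, together with a small "plateau" modification of the earlier graph to keep $f(x^*)$ on track towards its target value $\sup f$. The uniform bound $\|f_s - f_{s-1}\|_\infty \le 2^{-s}$ yields effective uniform convergence, so $f$ is computable; the bump widths are tuned so that the contributions from successive stages accumulate at $x^*$ (making $f(x^*) = \sup f$) but any point $t \ne x^*$ lies outside the support of all but finitely many bumps and therefore satisfies $f(t) < \sup f$. A computable maximizer $y \in [0,1]$ would then force $y = x^*$, which is impossible.

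\textbf{Main obstacle.} The central difficulty is reconciling effective uniform convergence of $(f_s)$ --- needed for $f$ to be computable --- with the fact that the Specker sequence $(x_n)$ has no computable modulus of convergence, which is exactly what prevents $x^*$ from being computable. The resolution is to impose effectivity only on the bump \emph{heights} ($\le 2^{-s}$ at stage $s$), while allowing the bump \emph{positions} to wander according to the ineffective sequence $(x_n)$; the $L^\infty$-perturbation is controlled by the heights, independently of how far the positions shift. The remaining combinatorial check --- that the bumps accumulate at $x^*$ and nowhere else, so that $\{t : f(t) = \sup f\} = \{x^*\}$ --- follows from the monotonicity of $(x_n)$, since for any $t \ne x^*$ only finitely many of the rationals $x_s$ lie within distance of $t$ comparable to the corresponding bump widths.
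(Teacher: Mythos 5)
There is a fatal gap: the object you are trying to build cannot exist. Your plan is to produce a computable $f:[0,1]\to\IR$ whose \emph{unique} global maximizer is the non-computable left-c.e.\ real $x^*=\sup_nx_n$. But for any computable $f:[0,1]\to\IR$ the maximum value $\max f([0,1])$ is itself computable (computable compactness of $[0,1]$; this is exactly the fact used in the proof of Theorem~\ref{thm:MAX-WKL} to form $g:=f-\max f(X)$), and hence the maximizer set $A=\{t:f(t)=\max f([0,1])\}=[0,1]\sm\{t:f(t)<\max f([0,1])\}$ is a co-c.e.\ closed (lightface $\pO{1}$) subset of $[0,1]$. A $\pO{1}$ singleton in $[0,1]$ is computable: enumerating rational intervals exhausting the complement, compactness guarantees that for each $n$ some finite subfamily covers all of $[0,1]$ except an interval of length $2^{-n+1}$ around the point, and searching for such a stage yields a $2^{-n}$\dash{}approximation. (More generally, every nonempty countable $\pO{1}$ class has a computable member, since its isolated points are computable.) So if your construction really achieved $\{t:f(t)=\sup f\}=\{x^*\}$, then $x^*$ would be computable --- contradicting the very property of the Specker number you rely on. Concretely, the construction must break down in the bookkeeping: the later bumps have total height $\sum_{j>s}2^{-j}=2^{-s}$, which is not enough to lift $f(x^*)$ strictly above the peak of the stage-$s$ bump at $x_s$, and no ``plateau'' correction can repair this without destroying either computability of $f$ or uniqueness of the maximizer.

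This is precisely why the Theorem of the Maximum lives in a different equivalence class from the Monotone Convergence Theorem, as the paper emphasizes: a correct counterexample needs a nonempty co-c.e.\ closed set \emph{without any computable points} (necessarily perfect, hence uncountable) as its maximizer set, and that is what a Kleene tree provides. The paper's route is exactly this: Specker's construction transfers a computable infinite binary tree without computable paths (Proposition~\ref{prop:WKL}) into a computable $f$ whose set of maximizers is the image of $[T]$; in the uniform language of the paper this is the chain $\MAX_{[0,1]}\equivW\Z_{[0,1]}\equivW\C_{2^\IN}\equivW\WKL$ (Theorems~\ref{thm:MAX-WKL} and~\ref{thm:C2N}), from which Proposition~\ref{prop:MAX} follows. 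Specker sequences, by contrast, yield left-c.e.\ non-computable reals and calibrate the $\MCT$/$\lim$ class (Theorem~\ref{thm:lim}); they are the wrong tool here because any attempt to encode such a real as a unique maximizer is blocked by the $\pO{1}$-singleton argument above. To fix your proof you should abandon uniqueness and instead arrange the maximizer set to be (a homeomorphic copy of) the path set of a Kleene tree.
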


Similar results have also been derived by Zaslavski{\u{i}} \cite{Zas55}.
Specker used a Kleene tree for his construction of a counterexample. A Kleene tree is  a computable counterexample to Weak K\H{o}nig's Lemma.

\begin{theorem}[Weak K\H{o}nig's Lemma]
\label{thm:WKL}
Every infinite binary tree has an infinite path.
\end{theorem}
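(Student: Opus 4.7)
The plan is to build an infinite path $p \in \Cantor$ through the tree $T \In \{0,1\}^*$ by a recursive König-style selection. The underlying principle is a counting dichotomy: call a node $\sigma \in T$ \emph{extendible} if the set $T_\sigma := \{\tau \in T : \sigma \prefix \tau\}$ of its descendants in $T$ is infinite. Since $T$ itself is infinite and coincides with $T_\varepsilon$, the root $\varepsilon$ is extendible.

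The key combinatorial step is the following observation: for any extendible $\sigma \in T$, at least one of its two potential children $\sigma 0, \sigma 1$ lies in $T$ and is itself extendible. Indeed, $T_\sigma \In \{\sigma\} \cup T_{\sigma 0} \cup T_{\sigma 1}$, and a union of two finite sets with one extra element cannot be infinite; so if $T_\sigma$ is infinite then at least one of $T_{\sigma 0}$ or $T_{\sigma 1}$ is infinite, and in particular the corresponding child belongs to $T$ (since prefixes of elements in $T$ are in $T$).

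Given this, I would define $p$ inductively so as to maintain the invariant that each prefix $p|_n = \langle p(0), \ldots, p(n-1)\rangle$ is an extendible node of $T$. At stage $n$, I pick $p(n) := 0$ if $p|_n \frown 0$ is extendible, and $p(n) := 1$ otherwise (breaking ties by preferring $0$). By the dichotomy above, $p|_{n+1}$ is then extendible, hence in particular lies in $T$. Since $T$ is closed under prefixes and every finite prefix of $p$ belongs to $T$, the sequence $p$ is an infinite path through $T$.

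The main obstacle, and the conceptually interesting point connecting this theorem to the preceding discussion, is that the predicate ``$\sigma$ is extendible'' is $\Pi^0_2$ (``for every $n$ there exists $\tau \in T$ of length $\ge n$ extending $\sigma$''). Deciding it at each step is therefore not constructive, and the recursive choice of $p(n)$ relies essentially on classical logic. This non-effectivity is not an artifact of the proof: as foreshadowed by the next paragraph of the paper, Kleene's computable counterexample shows that there exists a computable infinite binary tree with no computable infinite path, so no uniform, computable procedure can replace the use of excluded middle in the selection step.
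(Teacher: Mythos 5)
Your proof is correct: it is the standard König's lemma argument via extendible nodes, which is exactly the classical proof this survey paper takes for granted and does not spell out. Your closing remark about the $\Pi^0_2$ nature of extendibility and the necessity of non-constructive choice is also accurate and matches the paper's subsequent discussion of Kleene's computable counterexample (Proposition~\ref{prop:WKL}).
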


Kleene \cite{Kle52a} has proved that such counterexamples exist and (like Proposition~\ref{prop:IVT2}) this can be easily achieved
using two computably inseparable c.e.\ sets.

\begin{proposition}[Kleene 1952]
\label{prop:WKL}
There exists a computable infinite binary tree without computable paths.
\end{proposition}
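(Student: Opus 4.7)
The plan is to adapt the same ingredient used in Proposition~\ref{prop:IVT2}: two disjoint c.e.\ sets $A,B\In\IN$ that are computably inseparable, i.e.\ no computable set $S$ satisfies $A\In S$ and $S\cap B=\emptyset$. Their existence is a classical fact of recursion theory, obtained by a standard diagonalization against all computable sets. Fix uniformly computable approximations $A=\bigcup_s A_s$ and $B=\bigcup_s B_s$ with $A_s,B_s\In\{0,\dots,s-1\}$ finite.

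Define the set
$$T:=\bigl\{\sigma\in\{0,1\}^*:(\forall n<|\sigma|)\bigl(n\in A_{|\sigma|}\TO\sigma(n)=1\bigr)\AND\bigl(n\in B_{|\sigma|}\TO\sigma(n)=0\bigr)\bigr\}.$$
Membership in $T$ is decidable in $\sigma$ since $A_{|\sigma|},B_{|\sigma|}$ are computable from $|\sigma|$. If $\tau\prefix\sigma\in T$, then at stage $|\tau|\leq|\sigma|$ we have $A_{|\tau|}\In A_{|\sigma|}$ and $B_{|\tau|}\In B_{|\sigma|}$, so the defining condition for $\tau$ is weaker than the one already satisfied by $\sigma$; hence $\tau\in T$ and $T$ is a computable binary tree.

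The next step is to see that $T$ is infinite. Let $p:=\chi_A\in\Cantor$. For every $k$ and every $n<k$: if $n\in A_k$ then $n\in A$ and so $p(n)=1$; if $n\in B_k$ then $n\in B$, and since $A\cap B=\emptyset$ we have $n\notin A$, hence $p(n)=0$. Thus $p\restriction k\in T$ for all $k$, so $T$ contains arbitrarily long strings. Finally, suppose for contradiction that $T$ has a computable infinite path $q\in\Cantor$. For each $n\in A$ there is a stage $s_n>n$ with $n\in A_{s_n}$; applied to $\sigma=q\restriction s_n\in T$ this forces $q(n)=1$. Symmetrically $n\in B$ forces $q(n)=0$. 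Hence $S:=\{n\in\IN:q(n)=1\}$ is a computable set with $A\In S$ and $S\cap B=\emptyset$, contradicting the computable inseparability of $A$ and $B$.

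The only delicate point is the design of $T$ so that it is simultaneously (i) prefix-closed, (ii) decidable from a one-stage snapshot of the enumerations, and (iii) rich enough that every infinite path encodes a separator of $A$ and $B$; using the length $|\sigma|$ as the stage parameter handles (i) and (ii) simultaneously, and (iii) then delivers the desired contradiction against any computable path.
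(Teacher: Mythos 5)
Your construction is correct and is exactly the approach the paper indicates: it states that Kleene's counterexample ``can be easily achieved using two computably inseparable c.e.\ sets,'' and your tree (forcing $\sigma(n)=1$ on $A_{|\sigma|}$ and $\sigma(n)=0$ on $B_{|\sigma|}$) is the standard realization of that idea, with prefix-closure, infiniteness via $\chi_A$, and the separation argument all checked properly. No gaps.
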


It is interesting to note that even though computable infinite binary trees do not need
to have computable infinite paths, they do at least have paths that are {\em low}, which means
that the halting problem relative to this path is not more difficult than the ordinary halting problem.
In this sense low paths are ``almost computable''. The existence of such solutions has been proved
by Jockusch and Soare in their now famous Low Basis Theorem \cite{JS72}.

\begin{theorem}[Low Basis Theorem of Jockusch and Soare 1972]
\label{thm:LBT}
Every computable infinite binary tree has a low path.
\end{theorem}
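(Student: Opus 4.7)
My plan is to follow the classical forcing argument with nested nonempty $\PO{1}$ classes, driven by queries to $\emptyset'$. The key compactness observation is that a $\PO{1}$ subclass $[S]\In\Cantor$ presented by a computable binary tree $S$ is empty iff $S$ is finite iff some level of $S$ is already empty, which is a $\SO{1}$ property of the index of $S$. Consequently nonemptiness of such a class is uniformly $\emptyset'$-decidable from an index of its presenting tree, and $\emptyset'$ is powerful enough to drive a construction that repeatedly asks ``is this $\PO{1}$ subclass still nonempty?''.

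Concretely, I would set $T_0:=[T]$, which is nonempty by hypothesis, enumerate all Turing functionals $\Phi_0,\Phi_1,\ldots$, and inductively define
\[
U_e:=\{P\in T_e : \Phi_e^P(e)\uparrow\},
\]
setting $T_{e+1}:=U_e$ if $U_e\neq\emptyset$ and $T_{e+1}:=T_e$ otherwise. Each $U_e$ is $\PO{1}$ because divergence of $\Phi_e^P(e)$ is a $\PO{1}$ condition on $P$, and an index for the tree presenting $U_e$ is computable from an index for the tree presenting $T_e$ together with $e$. The emptiness test at stage $e$ is a single query to $\emptyset'$, so the whole sequence of tree indices is uniformly $\emptyset'$-computable. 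By compactness of $\Cantor$ the nested intersection $\bigcap_e T_e$ is nonempty, and picking any $P^*$ in it yields an infinite path through $T$.

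To verify $(P^*)'\leqT\emptyset'$, I would read the jump directly off the construction: the $e$-th bit of $(P^*)'$ records whether $\Phi_e^{P^*}(e)\downarrow$, and this is completely determined by the single $\emptyset'$-query made at stage $e$. If $\emptyset'$ reported $U_e\neq\emptyset$, then $P^*\in T_{e+1}=U_e$ forces $\Phi_e^{P^*}(e)\uparrow$; if it reported $U_e=\emptyset$, then every element of $T_e$, and in particular $P^*$, satisfies $\Phi_e^{P^*}(e)\downarrow$. The main technical hurdle is the uniformity bookkeeping: one must verify that an index for the tree presenting $T_{e+1}$ is a total computable function of an index for the tree presenting $T_e$ together with the single emptiness bit, so that the construction iterates on a $\emptyset'$-computable schedule without the complexity inadvertently escalating to $\emptyset''$. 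Once this uniformity is arranged, the remainder is routine compactness plus unwinding the definition of Turing reducibility.
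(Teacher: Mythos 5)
Your proposal is correct and is precisely the classical Jockusch--Soare argument via forcing with nested nonempty $\PO{1}$ classes; the paper gives no proof of this theorem itself but simply cites the original source, and your argument is the one found there. All the essential points --- the $\SO{1}$ emptiness test for a $\PO{1}$ class (hence decidable from $\emptyset'$), the uniform computability of the tree indices across stages, and reading the bits of $(P^*)'$ directly off the sequence of oracle answers --- are handled correctly.
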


Such low solutions also exist in case of the Theorem of the Maximum~\ref{thm:MAX} for computable instances.
The Monotone Convergence Theorem~\ref{thm:MCT} is an example of a theorem where not even 
low solutions exist in general, e.g., Specker's sequence is already
an example of a computable monotone and bounded sequence with a least upper bound
that is equivalent to the halting problem and hence not low. 

Another case similar to the Theorem of the Maximum is the Brouwer Fixed Point Theorem.

\begin{theorem}[Brouwer Fixed Point Theorem]
\label{thm:BFT}
For every continuous function $f:[0,1]^k\to[0,1]^k$ there exists a point $x\in[0,1]^k$ such that $f(x)=x$.
\end{theorem}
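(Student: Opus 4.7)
The plan is to prove the theorem via Sperner's Lemma combined with a compactness argument. First, I would reduce to the standard $k$-simplex $\Delta^k$ via a homeomorphism with $[0,1]^k$, which preserves the fixed point property for continuous self-maps; working on the simplex is advantageous because barycentric coordinates provide a natural labeling scheme, and the cube-to-simplex homeomorphism is explicit and not an obstacle.

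The key construction is a Sperner-admissible labeling of the vertices of a triangulation $T$ of $\Delta^k$: to each vertex $v$, with barycentric coordinates $(v_0,\dots,v_k)$, I assign a label $\ell(v)\in\{0,1,\dots,k\}$ chosen among those indices $i$ for which $v_i>0$ and the $i$-th barycentric coordinate of $f(v)$ does not exceed $v_i$. Such an index always exists unless $v$ is already a fixed point, since $\sum_i v_i=\sum_i f(v)_i=1$ forces some coordinate to be weakly decreased. The Sperner boundary condition is met automatically: a vertex on the face opposite the $i$-th simplex vertex has $v_i=0$ and so cannot receive label $i$.

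Next, I would apply this to a sequence of triangulations $T_n$ with mesh tending to zero. Sperner's Lemma yields, for each $n$, a fully labeled simplex $\sigma_n\in T_n$ whose $k+1$ vertices carry all labels $0,\dots,k$. By compactness of $\Delta^k$, a subsequence of one chosen vertex from each $\sigma_n$ converges to some $x^*\in\Delta^k$, and since the mesh goes to zero \emph{all} vertices of $\sigma_n$ converge to $x^*$ along that subsequence. Continuity of $f$ and the defining property of the labeling then imply $f(x^*)_i\leq x^*_i$ for every $i$; since both sides sum to $1$, equality holds coordinatewise, so $f(x^*)=x^*$.

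The main obstacle is Sperner's Lemma itself, whose proof proceeds by an induction on $k$ using a parity argument that counts ``doors'', i.e.\ $(k{-}1)$-faces carrying exactly the labels $\{0,\dots,k-1\}$. Some care is also needed when defining $\ell(v)$ at vertices where several barycentric coordinates vanish, to keep the boundary condition globally consistent. Once these combinatorial ingredients are secured, the passage to the limit is routine from compactness and continuity of $f$, and requires no further topological machinery.
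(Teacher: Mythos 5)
Your proposal is a correct and complete outline of the standard combinatorial proof of Brouwer's theorem via Sperner's Lemma. Note, however, that the paper is a historical survey and states the Brouwer Fixed Point Theorem as a classical result without giving any proof of it (its concern is only with the computability-theoretic status of the theorem, e.g.\ the Orevkov--Baigger counterexample and the Weihrauch classification $\BFT_n\equivW\C_{2^\IN}$ for $n\geq2$), so there is no proof in the paper to compare against. Your argument itself is sound: the reduction to $\Delta^k$ is legitimate since the fixed point property is a topological invariant and the cube is homeomorphic to the simplex; the labeling $\ell(v)=i$ with $v_i>0$ and $f(v)_i\leq v_i$ always admits a valid choice (indeed even at fixed points, since otherwise summing $f(v)_i>v_i$ over the indices with $v_i>0$ would exceed $1$, so your caveat ``unless $v$ is a fixed point'' is unnecessary but harmless); the requirement $v_i>0$ automatically enforces the Sperner boundary condition, so no additional care is actually needed at vertices with several vanishing coordinates; and the limit argument via vanishing mesh, compactness and continuity is routine. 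The only ingredients you leave unproved --- Sperner's Lemma by the door-counting parity induction, and the existence of triangulations of arbitrarily small mesh via iterated barycentric subdivision --- are standard and correctly identified as the combinatorial core. It is perhaps worth observing that this proof is nonconstructive precisely at the compactness step (extracting a convergent subsequence), which is consistent with the paper's point that the theorem admits no constructive proof and has computable counterexamples in dimension $k\geq2$.
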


It is an ironic coincidence that Brouwer, who was a strong proponent of intuitionistic mathematics
is most famous for his Fixed Point Theorem that does not admit a constructive proof. 
It was Orevkov \cite{Ore63} who proved in the sense of Markov's school that there is a computable counterexample and
Baigger \cite{Bai85} later proved this result in terms of modern computable analysis.

\begin{proposition}[Orevkov 1963, Baigger 1985]
\label{prop:BFT}
There is a computable function $f:[0,1]^2\to[0,1]^2$ without a computable $x\in[0,1]^2$
with $f(x)=x$.
\end{proposition}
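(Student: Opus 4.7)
The plan is to reduce the problem to Proposition~\ref{prop:WKL}. Starting from a computable infinite binary tree $T\In\{0,1\}^*$ without computable infinite paths, I will construct a computable continuous $f:[0,1]^2\to[0,1]^2$ whose set of fixed points is in effective bijection with $[T]$, the set of infinite paths through $T$. Since $[T]$ contains no computable element, $f$ will have no computable fixed point, while Brouwer's theorem applied nonconstructively guarantees that $f$ has at least one fixed point.

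The geometric idea is to encode $T$ into $[0,1]^2$ by a nested family of closed subrectangles. For each $\sigma\in T$ pick a rectangle $R_\sigma\In[0,1]^2$ with $R_\emptyset=[0,1]^2$, so that $R_{\sigma 0}$ and $R_{\sigma 1}$ are disjoint closed subrectangles of $R_\sigma$ whose diameters tend to $0$ as $|\sigma|\to\infty$. Then for every infinite path $p\in[T]$ the intersection $\bigcap_n R_{p|_n}$ is a single point $x_p$. On the pieces of $[0,1]^2$ that lie outside the nested rectangles, or inside any $R_\sigma$ whose subtree eventually dies out in $T$, define $f$ by a continuous piecewise-linear displacement that is nowhere-vanishing, respects $f([0,1]^2)\In[0,1]^2$, and has boundary values that match on shared edges of adjacent pieces. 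The resulting $f$ is globally continuous and has fixed-point set exactly $\{x_p:p\in[T]\}$.

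Computability of $f$ follows from computability of $T$: the vertices of the rectangles $R_\sigma$ and the affine ingredients of the local pieces can be listed uniformly from $\sigma$, yielding a name of $f$ as an element of $\CC([0,1]^2,[0,1]^2)$; concretely, one would approximate $f$ by a computable sequence $(f_n)$ of piecewise-linear maps that stabilise on each compact piece as soon as $T$ has been enumerated to sufficient depth. Conversely, from any fixed point $x$ of $f$ one can computably read off the unique path $p\in[T]$ with $x=x_p$ by testing, at each level $n$, which of $R_{\sigma 0}$ or $R_{\sigma 1}$ contains $x$. Hence a computable fixed point would deliver a computable infinite path through $T$, contradicting Proposition~\ref{prop:WKL}.

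The main obstacle is the geometric construction of $f$. One has to guarantee simultaneously (a) global continuity across all the piecewise-linear joins, (b) the self-map property $f([0,1]^2)\In[0,1]^2$, and (c) the absence of \emph{unintended} fixed points in the regions between and outside the nested rectangles, in particular inside the $R_\sigma$ whose subtrees become extinct. This combinatorial-geometric bookkeeping is the technical heart of the arguments of Orevkov and Baigger, who refine the classical nonconstructive proof of the Brouwer Fixed Point Theorem due to Hirsch; the conceptual reduction to Weak K\H{o}nig's Lemma is straightforward, but making all the pieces mesh continuously without producing spurious equilibria requires genuine care.
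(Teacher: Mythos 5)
Your overall strategy---seed the construction with a computable infinite binary tree that has no computable path (Proposition~\ref{prop:WKL}), or equivalently with two computably inseparable c.e.\ sets, and encode it geometrically into the fixed point set of a computable self-map of the square---is indeed the strategy of Orevkov and Baigger, and the paper itself offers nothing beyond the citation (it only records that Baigger used computably inseparable c.e.\ sets). The problem is that your proposal defers exactly the step that carries all the mathematical weight, and the specific plan you outline for that step cannot be completed as stated. The obstruction is the fixed point index. For a continuous $f:[0,1]^2\to[0,1]^2$ the total fixed point index is $1$ (the Lefschetz number), and for any region $U$ whose boundary misses the fixed point set the index $\mathrm{ind}(f,U)$ is the winding number of the displacement field $x\mapsto f(x)-x$ on $\partial U$; these indices are additive. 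In your nested-rectangle scheme the displacement on each $\partial R_\sigma$ must, in the limit function, be bounded away from $0$ by some definite positive amount (otherwise a fixed point appears on the boundary, already violating exactness), so once your approximations are accurate to within that amount the winding number on $\partial R_\sigma$ is irrevocably frozen. If a rectangle is frozen at nonzero index and its subtree later dies out, you cannot fill it in without a fixed point, and that spurious fixed point computes no path; if you play safe and freeze every level-$n$ rectangle at index $0$, the indices inside the rectangles sum to $0\neq1$ and spurious fixed points must appear outside them. The dilemma recurs at every branching: a parent of index $1$ with two live children must hand index $1$ to one child and $0$ to the other \emph{before} knowing which branch survives, and survival of a subtree is a $\Pi^0_1$ event that is never confirmed at a finite stage. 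So the fixed point set cannot be made ``exactly $\{x_p:p\in[T]\}$'' by the displacement bookkeeping you describe, and with it falls the step ``from any fixed point computably read off a path.''

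This is precisely why the planar case is regarded as hard: the paper notes that for the Brouwer Fixed Point Theorem ``completely new techniques are required'' and attributes the uniform two-dimensional result to Joseph Miller. The actual constructions of Orevkov, Baigger and Miller do not realize an embedded copy of $[T]$ as the fixed point set; they admit a larger co-c.e.\ closed fixed point set (with additional structure that transports the index between the candidate regions) arranged so that every fixed point still computes a separating set for the two inseparable c.e.\ sets, hence is non-computable. Your choice of seed and your final contradiction are fine, but the ``combinatorial-geometric bookkeeping'' you set aside is not bookkeeping---it is the theorem.
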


One can conclude from Proposition~\ref{prop:IVT} that such a counterexample cannot
exist in dimension $k=1$. Baigger also used two c.e.\ sets that are computably inseparable 
for his construction. 

Yet another theorem with interesting computability properties is the Theorem of Bolzano-Weierstra\ss{}.

\begin{theorem}[Bolzano-Weierstra\ss{}]
\label{thm:BWT}
Every sequence $(x_n)_n$ in the unit cube $[0,1]^k$ has a cluster point.
\end{theorem}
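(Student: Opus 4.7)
The plan is to reduce the statement to the one-dimensional case and then to prove that case by the classical nested-intervals (bisection) argument. For the reduction, given $(x_n)_n$ in $[0,1]^k$ with $x_n=(x_n^{(1)},\ldots,x_n^{(k)})$, I would apply the one-dimensional Bolzano-Weierstra\ss{} theorem successively in each coordinate: first extract a subsequence along which $x_n^{(1)}$ converges; then a further subsequence along which $x_n^{(2)}$ converges (which preserves convergence of the first coordinate because every subsequence of a convergent real sequence converges to the same limit); and so on. After $k$ such extractions every coordinate converges, so the resulting subsequence converges in $[0,1]^k$ and its limit is a cluster point of $(x_n)_n$.

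For the one-dimensional case I would construct nested closed intervals $[0,1]=I_0\supseteq I_1\supseteq\ldots$ of length $|I_n|=2^{-n}$ such that $\{m\in\IN:x_m\in I_n\}$ is infinite for every $n$. This is done inductively: bisect $I_n$ into its two closed halves; since $I_n$ contains infinitely many $x_m$, at least one of the two halves does too, and I let $I_{n+1}$ be such a half. By completeness of $\IR$ the intersection $\bigcap_{n\in\IN}I_n$ consists of a single point $x^\ast\in[0,1]$. Choosing indices $n_0<n_1<n_2<\ldots$ with $x_{n_k}\in I_k$ (possible because each $I_k$ captures infinitely many terms, so one can always find such an $n_k$ strictly larger than $n_{k-1}$) yields $|x_{n_k}-x^\ast|\leq 2^{-k}$, hence $x_{n_k}\to x^\ast$, establishing that $x^\ast$ is a cluster point.

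An alternative route worth mentioning is via Theorem~\ref{thm:MCT}: every real sequence has a monotone subsequence by the classical ``peak point'' lemma, and a bounded monotone sequence has its supremum or infimum as a limit by the Monotone Convergence Theorem, giving the cluster point directly. This derivation is attractive in the present paper because it exhibits $\BWT$ as a consequence of $\MCT$, anticipating the comparison of their computational content.

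The only real obstacle is bookkeeping in the bisection step, namely ensuring that one genuinely extracts a subsequence with strictly increasing indices; this is handled by the explicit choice $n_k>n_{k-1}$ above, using that $I_k$ contains infinitely, not merely nonemptily, many terms. The higher-dimensional step itself is routine once one notes that the product structure of $[0,1]^k$ respects componentwise convergence.
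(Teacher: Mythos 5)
Your proof is correct: the coordinatewise subsequence extraction reduces to dimension one, and the bisection/nested-intervals argument (with the explicit choice $n_k>n_{k-1}$, justified by each $I_k$ containing infinitely many terms) is the standard complete proof; the alternative via the peak-point lemma and Theorem~\ref{thm:MCT} is also valid. The paper states this classical theorem without proof, so there is nothing to compare against; your argument matches the standard one and fills that gap correctly.
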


It was Rice \cite{Ric54} who proved that a straightforward computable version of the Bolzano-Weierstra\ss{} Theorem does not hold 
and Kreisel pointed out in his review of this article for the Mathematical Reviews of the American Mathematical Society 
that he already proved a more general result \cite{Kre52}.

\begin{proposition}[Kreisel 1952, Rice 1954]
\label{prop:BWT}
There exists a computable sequence $(x_n)_n$ in $[0,1]$ without a computable cluster point.
\end{proposition}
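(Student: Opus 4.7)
The plan is to reuse Specker's sequence from Proposition~\ref{prop:MCT} directly, since a monotone bounded sequence has the simplest possible cluster-point structure. First I would recall Specker's construction: if $(K_s)_s$ is a computable enumeration of the halting problem $K\In\IN$ with $K_s\In K_{s+1}$ and $K=\bigcup_s K_s$, then setting $x_n:=\sum_{i\in K_n}2^{-i-1}$ yields a computable, monotone increasing sequence bounded above by $\sum_{i=0}^\infty 2^{-i-1}=1$. In particular $(x_n)_n$ lies in $[0,1]$, and by Proposition~\ref{prop:MCT} its supremum $x=\sum_{i\in K}2^{-i-1}$ is not a computable real number.

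Next I would argue that this very sequence witnesses the proposition. Being monotone increasing and bounded, $(x_n)_n$ converges in $\IR$ to $x=\sup_n x_n$. For any convergent real sequence every cluster point coincides with the limit, since each subsequence converges to the same value; hence $x$ is the \emph{unique} cluster point of $(x_n)_n$. Because $x$ is not computable, $(x_n)_n$ has no computable cluster point at all.

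There is essentially no obstacle here: the statement is an immediate corollary of Proposition~\ref{prop:MCT} together with the elementary observation that a convergent real sequence has exactly one cluster point. The only minor bookkeeping is ensuring that Specker's sequence can be arranged to remain inside $[0,1]$, which follows at once from the geometric-series bound above. I would expect a more substantive proof only if one insisted on a sequence that is not itself convergent — for instance by interleaving Specker's sequence with $1-x_n$ to produce a sequence with two non-computable cluster points — but for the statement as given, the convergent Specker sequence suffices.
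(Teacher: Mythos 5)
Your proposal is correct and matches the paper's own (very brief) argument: the paper likewise observes that Proposition~\ref{prop:BWT} is a simple consequence of Proposition~\ref{prop:MCT}, since the Specker sequence is monotone, bounded, and convergent, so its only cluster point is its non-computable limit. Your extra care with the normalization $2^{-i-1}$ to keep the sequence inside $[0,1]$ is a reasonable bit of bookkeeping that the paper leaves implicit.
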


In fact, this result is not all too surprising and a simple consequence of Proposition~\ref{prop:MCT}.
What is more interesting is that in case of the Bolzano-Weierstra\ss{} Theorem there are even computable bounded sequences
without {\em limit computable} cluster point. Here a point is called {\em limit computable} if it is the limit of a computable sequence.
However, this was established only much later by Le Roux and Ziegler \cite{LZ08a} (answering a question posed by Giovanni Lagnese on the email
list {\em Foundations of Mathematics} [fom] in 2006).

\begin{proposition}[Le Roux and Ziegler 2008]
\label{prop:BWT2}
There exists a computable sequence $(x_n)_n$ in $[0,1]$ without a limit computable cluster point.
\end{proposition}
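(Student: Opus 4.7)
The plan is a direct diagonalisation against all limit computable reals in $[0,1]$. Fix an effective enumeration $(\varphi_e)_{e\in\IN}$ of all partial computable functions $\varphi_e:\IN\to\IQ$, so that every limit computable $y\in[0,1]$ is of the form $y=\lim_n\varphi_e(n)$ for some total $\varphi_e$. Set $\epsilon_e:=2^{-e-2}$; the geometric budget
$$\sum_{e=0}^{\infty}2\epsilon_e=\sum_{e=0}^{\infty}2^{-e-1}=1$$
guarantees that at every finite stage the union of the chosen $\epsilon_e$-neighbourhoods still leaves room inside $[0,1]$.

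At stage $s$, for each $e\le s$ I would let $m_e(s)$ be the largest $m\le s$ such that $\varphi_e(0),\ldots,\varphi_e(m)$ all halt within $s$ steps of simulation (undefined if even $\varphi_e(0)$ does not halt in time), and set $v_e(s):=\varphi_e(m_e(s))$ when this is defined. Then define
$$F_s:=\bigcup_{\substack{e\le s\\ m_e(s)\downarrow}}\bigl(v_e(s)-\epsilon_e,\,v_e(s)+\epsilon_e\bigr).$$
This is a uniformly computable finite union of open rational intervals of total length at most $\sum_{e=0}^s 2^{-e-1}<1$, so $[0,1]\setminus F_s$ always contains a rational; let $x_s$ be the first such rational in a fixed computable enumeration of $\IQ\cap[0,1]$. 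By construction $(x_n)_n$ is a computable sequence in $[0,1]$.

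To verify correctness, fix any limit computable $y\in[0,1]$ and pick $e$ with $\varphi_e$ total and $\lim_n\varphi_e(n)=y$. Totality forces $m_e(s)\to\infty$ as $s\to\infty$, and combined with $\varphi_e(n)\to y$ this yields $v_e(s)=\varphi_e(m_e(s))\to y$. Choose $N\ge e$ large enough that for all $s\ge N$ both $m_e(s)\downarrow$ and $|v_e(s)-y|<\epsilon_e/2$ hold. Then $(y-\epsilon_e/2,\,y+\epsilon_e/2)\subseteq(v_e(s)-\epsilon_e,\,v_e(s)+\epsilon_e)\subseteq F_s$, hence $x_s\notin(y-\epsilon_e/2,\,y+\epsilon_e/2)$ for every $s\ge N$, and $y$ is not a cluster point of $(x_n)_n$.

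The essential ingredients are the shrinking budget $\epsilon_e=2^{-e-2}$, which keeps every $F_s$ of strictly sub-unit measure so that a computable $x_s$ always exists, and the running-time diagonalisation $m_e(s)$, which converts the non-c.e.\ property ``$\varphi_e$ is total'' into an approximation that genuinely tracks the candidate limit $y$. I expect this second point to be the main technical subtlety; the existence of some cluster point is automatic from Bolzano-Weierstra{\ss} (Theorem~\ref{thm:BWT}), so no positive construction of a cluster point is required.
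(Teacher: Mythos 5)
Your construction is correct. The key points all check out: at each finite stage the blocked region $F_s$ is a finite union of open rational intervals of total length at most $1-2^{-s-1}<1$, so its complement in $[0,1]$ is a finite union of closed intervals of positive total length and a witness rational $x_s$ can be found by decidable search; the runtime-truncated index $m_e(s)$ tends to infinity exactly when $\varphi_e$ is total, so for every limit computable $y=\lim_n\varphi_e(n)$ the moving interval around $v_e(s)$ eventually traps the fixed neighbourhood $(y-\epsilon_e/2,y+\epsilon_e/2)$, which therefore contains only finitely many terms of the sequence. (One routine point you implicitly use and could state: a point that is the limit of a computable sequence of \emph{reals} is also the limit of a computable sequence of \emph{rationals}, so restricting the diagonalisation to rational-valued $\varphi_e$ loses nothing.) Your route is genuinely different from the paper's: the paper does not prove the proposition directly but derives it from the uniform classification $\BWT_\IR\equivW\WKL'$, i.e., from the fact that the Bolzano--Weierstra\ss{} Theorem is equivalent to the jump of Weak K\H{o}nig's Lemma, combined with the existence of $\emptyset'$-computable infinite binary trees without $\emptyset'$-computable paths (a relativized Kleene tree), limit computable points being exactly the $\emptyset'$-computable ones by Shoenfield's limit lemma. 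Your argument is more elementary and self-contained --- a single measure-budgeted diagonalisation with no machinery from the Weihrauch lattice --- while the paper's approach buys much more in exchange for that machinery: the full uniform degree of $\BWT_\IR$, and the accompanying positive result that every computable sequence in $[0,1]^n$ has a cluster point that is low relative to the halting problem, which a pure diagonalisation cannot deliver.
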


This is in notable contrast to all aforementioned results that all admit a
limit computable solution for computable instances. For instance, the Monotone Convergence Theorem~\ref{thm:MCT} 
itself implies that every monotone bounded sequence is convergent and hence every computable monotone bounded sequence
automatically has a limit computable supremum. 
Hence, in a certain sense the Bolzano-Weierstra\ss{} Theorem is even less computable than all the 
other results mentioned in this section. 

In this section we have only discussed a selection of theorems that can illustrate a certain variety of possibilities that occur.
The computational content of several other theorems from real analysis has been studied. 
For instance Aberth~\cite{Abe71} constructed a computable counterexample in the Russian sense for 
the Peano Existence Theorem for solutions of ordinary differential equations. Later 
Pour-El and Richards \cite{PR79} constructed another counterexample in the modern sense of computable analysis for this theorem.
The Riemann Mapping Theorem is an interesting example of a theorem from complex analysis that was studied by Hertling \cite{Her99b}.
Now we turn to functional analysis.

\section{Some Theorems from Functional Analysis}

Starting with the work of Metakides, Nerode and Shore \cite{MN82,MNS85} theorems from functional analysis were studied from the perspective
of computability theory. In particular the aforementioned authors studied the Hahn-Banach Theorem. 

\begin{theorem}[Hahn-Banach Theorem]
\label{thm:HBT}
Let $X$ be a normed space over the field $\IR$ with a linear subspace $Y\In X$. 
Then every linear bounded functional $f:Y\to\IR$ has a linear bounded extension $g:X\to\IR$ with $||g||=||f||$.
\end{theorem}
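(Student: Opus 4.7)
The plan is to follow the classical Hahn-Banach argument via dominated linear extensions and Zorn's Lemma. First I would introduce the sublinear functional $p: X \to \IR$ defined by $p(x) := \|f\| \cdot \|x\|$; it is positively homogeneous and subadditive, and it dominates $f$ on $Y$ in the sense that $f(y) \leq p(y)$ for every $y \in Y$. It then suffices to find a linear extension $g: X \to \IR$ of $f$ with $g(x) \leq p(x)$ for all $x \in X$, because applying the inequality to both $x$ and $-x$ gives $|g(x)| \leq p(x) = \|f\| \cdot \|x\|$, hence $\|g\| \leq \|f\|$; the reverse inequality is automatic from $g|_Y = f$, which also yields $\|g\| \geq \|f\|$.

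The core step is the one-step extension lemma. Given a linear subspace $Z$ with $Y \In Z \subsetneq X$ and a linear $h: Z \to \IR$ extending $f$ with $h \leq p$ on $Z$, pick any $x_0 \in X \sm Z$ and try to extend $h$ to $Z + \IR x_0$ by choosing a value $c$ meant to serve as the value of the extension at $x_0$. After using positive homogeneity and splitting into the cases $t > 0$ and $t < 0$, the desired dominance $h(z) + tc \leq p(z + tx_0)$ for all $z \in Z$ and $t \in \IR$ reduces to the condition
\[
\sup_{z \in Z} \bigl( h(z) - p(z - x_0) \bigr) \;\leq\; c \;\leq\; \inf_{z \in Z} \bigl( p(z + x_0) - h(z) \bigr).
\]
The existence of such a $c$ follows from the key inequality that for any $z_1, z_2 \in Z$,
\[
h(z_1) + h(z_2) = h(z_1 + z_2) \leq p(z_1 + z_2) \leq p(z_1 - x_0) + p(z_2 + x_0),
\]
using linearity of $h$, the dominance $h \leq p$ on $Z$, and subadditivity of $p$.

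With this lemma in hand, I would apply Zorn's Lemma to the poset of all pairs $(Z, h)$, where $Z$ is a linear subspace with $Y \In Z \In X$ and $h: Z \to \IR$ is a linear extension of $f$ dominated by $p$, ordered by extension. Every chain admits an upper bound given by the union of the subspaces together with the common extension of the functionals, so a maximal element $(Z^*, g)$ exists; by the one-step lemma it must satisfy $Z^* = X$, for otherwise $g$ could be properly extended. The resulting $g$ is the sought norm-preserving linear extension.

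The main obstacle is establishing the sup-inf inequality in the one-step extension lemma; the rest is formal bookkeeping over the Zorn chain. It is worth noting that this argument is highly non-constructive: it invokes Zorn's Lemma both to select the value $c$ at each extension step (via the ordering of $\IR$) and to assemble the global extension, which already foreshadows the kind of computability obstructions one expects when approaching the Hahn-Banach Theorem from the perspective of computable analysis.
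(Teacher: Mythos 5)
Your argument is correct: it is the standard classical proof of the Hahn--Banach Theorem via the dominating sublinear functional $p(x)=\|f\|\cdot\|x\|$, the one-step extension lemma with the sup--inf inequality, and Zorn's Lemma. Note that the paper itself offers no proof of this statement --- it is quoted as classical background so that its computability-theoretic content (Proposition~\ref{prop:HBT} and the classification in Theorem~\ref{thm:C2N}) can be discussed --- so there is nothing in the paper to compare your route against; any standard textbook proof is what is implicitly intended. One small imprecision in your closing remark: Zorn's Lemma is not needed to ``select the value $c$'' in the one-step lemma, since once the sup--inf inequality is established one may simply take $c$ to be the supremum (a single, canonical choice); the genuinely non-constructive ingredients are the transfinite assembly of the maximal extension and, from the computable-analysis standpoint, the non-uniqueness of admissible $c$, which is exactly what the counterexample of Metakides, Nerode and Shore exploits. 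Your final observation that the non-constructivity foreshadows the computability obstructions is apt and consistent with the paper's placement of the Hahn--Banach Theorem in the equivalence class of $\C_{2^\IN}$.
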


Here $||f||:=\sup_{||x||\leq1}|f(x)|$ denotes the operator norm. The result holds analogously
over the field $\IC$. Here and in the following a {\em computable metric space} $X$ is just a metric space together with a dense sequence
such that the distances can be computed on that sequence (as a double sequence of real numbers).
If the space has additional properties or ingredients, such as a norm that generates the metric, then it is
called  a {\em computable normed space} or in case of completeness also a {\em computable Banach space}. 
If, additionally, the norm is generated by an inner product, then the space is called a {\em a computable Hilbert space}.
A subspace $Y\In X$ is called {\em c.e.\ closed} if there is a computable sequence $(x_n)_n$ in 
$X$ such that one obtains $\overline{\{x_n:n\in\IN\}}=Y$ (where the $\overline{A}$ denotes the closure of $A$).
Metakides, Nerode and Shore \cite{MNS85} constructed a computable counterexample
to the Hahn-Banach Theorem. 

\begin{proposition}[Metakides, Nerode and Shore 1985]
\label{prop:HBT}
There exists a computable Banach space $X$ over $\IR$ with a c.e.\ closed linear subspace $Y\In X$
and a computable linear functional $f:Y\to\IR$ with a computable norm $||f||$ such that every linear 
bounded extension $g:X\to\IR$ with $||g||=||f||$ is non-computable.
\end{proposition}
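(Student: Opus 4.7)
The plan is to start from two disjoint c.e.\ sets $A,B\In\IN$ that are computably inseparable; such a pair exists by a standard priority construction on the halting problem (the same tool Pour-El and Richards used for Proposition~\ref{prop:IVT2}). Fix once and for all computable $1$--$1$ enumerations $(a_s)_s$ of $A$ and $(b_s)_s$ of $B$.

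For the ambient space I would take a standard computable sequence space on the natural basis $(e_n)_n$, e.g.\ $X=c_0$ (or $X=\ell^1$), so that linear bounded functionals on $X$ are represented by coordinate sequences and $\|g\|$ has a transparent description in terms of $g(e_n)$. The subspace $Y\In X$ is built as the closed linear span of a computable sequence $(v_s)_s$ of vectors, where $v_s$ involves $e_{a_s}$ (paired with an auxiliary anchor coordinate $e_0$) whenever $s$ enters $A$ at stage $s$ and involves $e_{b_s}$ with an opposite sign convention whenever $s$ enters $B$. This makes $Y$ c.e.\ closed essentially by construction, with the $(v_s)_s$ serving as the witnessing dense computable sequence. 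I would then define $f:Y\to\IR$ by prescribing uniform rational values on the generators $v_s$ (say $f(v_s)=1$), extending linearly to the algebraic span and by continuity to $Y$; the coefficients inside the $v_s$ must be calibrated so that this $f$ is bounded, gives a computable norm $\|f\|$ (e.g.\ $\|f\|=1$), and actually admits norm-preserving extensions (which classical Hahn--Banach~\ref{thm:HBT} guarantees).

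The crux is to show that \emph{every} norm-preserving extension $g:X\to\IR$ is so tightly constrained that $g(e_n)$ must take one fixed value for $n\in A$ and a different fixed value for $n\in B$ (e.g.\ opposite signs, or $0$ versus a positive constant). Once this rigidity is in place, a putative computable extension $g$ would yield a computable sequence $n\mapsto g(e_n)$ whose sign (or a rational threshold test) would computably separate $A$ from $B$, contradicting computable inseparability. This finishes the argument, because Hahn--Banach itself still delivers a norm-preserving $g$, and that $g$ is therefore forced to be non-computable.

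The main obstacle is precisely the rigidity step. In spaces like $c_0$, norm-preserving extensions are notoriously far from unique, so one cannot hope to pin down $g(e_n)$ outright; instead one must choose $X$ and engineer the generators $v_s$ so that the extremal condition $\|g\|=\|f\|$ together with the constraint $g{\restriction}Y=f$ squeezes $g(e_n)$ into a single admissible value on $A\cup B$. This is where the actual geometric work lies: typically one exploits the extreme points of the dual unit ball, inserting anchor coordinates with just the right weight so that the slightest deviation in $g(e_{a_s})$ or $g(e_{b_s})$ would push $\|g\|$ strictly above $\|f\|$. The remaining verifications---computability of $f$, computability of $\|f\|$, and c.e.\ closedness of $Y$---are then routine bookkeeping once the weights in $v_s$ are fixed.
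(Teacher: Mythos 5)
You have correctly identified the strategy behind the known proofs of this proposition: start from disjoint, computably inseparable c.e.\ sets $A,B\In\IN$, build $Y$ and $f$ from their enumerations, and argue that the extremal condition $\|g\|=\|f\|$ pins each value $g(e_n)$ for $n\in A\cup B$ into one of two rationally separated regions, so that a computable extension would yield a computable separating set by the usual parallel semi-decision trick. The paper itself offers no proof here (it only cites Metakides, Nerode and Shore), but this is indeed the shape of their argument and of the later Weihrauch-theoretic treatment of the Hahn--Banach Theorem by Gherardi and Marcone referenced in Theorem~\ref{thm:C2N}.

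The difficulty is that your proposal stops exactly at the step that constitutes the proof. You name the rigidity of norm-preserving extensions as ``the crux'' and ``where the actual geometric work lies,'' correctly observe that in $c_0$ or $\ell^1$ with their standard norms such extensions are far from unique, and then defer the construction that would overcome this. That step is not routine bookkeeping: in the actual arguments one does \emph{not} take a classical sequence space with its standard norm and encode everything into the generators of $Y$; rather, one builds $X$ from two-dimensional blocks spanned by pairs $\{u_n,e_n\}$ whose unit balls are polygons deformed computably along the enumerations of $A$ and $B$, so that the interval of admissible values of $g(e_n)$ for a norm-preserving extension collapses toward one prescribed rational when $n$ enters $A$ and toward a different one when $n$ enters $B$, while $\|f\|$ remains computably equal to $1$ on all of $Y$ (checking the norm on arbitrary linear combinations of the generators, not just on each generator separately, is the real content). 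Moreover, your specific fallback of a single anchor coordinate $e_0$ is in tension with the goal: if each generator is $v_s=\alpha_s e_0+\beta_s e_{n_s}$, then the values $g(e_{n_s})$ of any extension are determined by the single real $g(e_0)$, whose admissible set is a compact interval; unless that interval is a non-computable singleton you always get computable extensions, and arranging a singleton while keeping $\|f\|$ computable is a different (Specker-type) construction that would likewise have to be carried out in detail. Either way, a concrete norm together with a verified rigidity lemma is missing, so what you have is an accurate plan rather than a proof.
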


Similarly, as the computability status of the Brouwer Fixed Point Theorem~\ref{thm:BFT} was dependent on the
dimension $k$ of the underlying space $[0,1]^k$, the computability status of the Hahn-Banach Theorem~\ref{thm:HBT}
is dependent on the dimension and other aspects of the space $X$ \cite{Bra08b}.
Nerode and Metakides \cite{MN82} observed that for finite-dimensional $X$ no counterexample 
as in Proposition~\ref{prop:HBT} exists. However, even in this case the theorem is not uniformly computable \cite{Bra08b}.
Under all conditions that guarantee that the extension is uniquely determined, the Hahn-Banach Theorem is fully computable;
this includes for instance all computable Hilbert spaces \cite{Bra08b}.

A number of further theorems from functional analysis were analyzed by the author of this article and these
include the Open Mapping Theorem, the Closed Graph Theorem and Banach's Inverse Mapping Theorem~\cite{Bra01b,Bra09}.
Another theorem that falls into this category is the Uniform Boundedness Theorem \cite{Bra06}.
These examples are interesting, since they behave differently from all aforementioned examples.
We illustrate the situation using Banach's Inverse Mapping Theorem.

\begin{theorem}[Banach's Inverse Mapping Theorem]
\label{thm:BIM}
If $T:X\to Y$ is a bijective, linear and bounded operator on Banach spaces $X,Y$, then
its inverse $T^{-1}:Y\to X$ is bounded too.
\end{theorem}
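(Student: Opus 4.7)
The plan is to deduce this from the Open Mapping Theorem, which the paper has already flagged as another theorem of this family whose computational content has been analyzed. Recall that the Open Mapping Theorem asserts that a surjective bounded linear operator between Banach spaces is open, i.e.\ maps open sets to open sets. Given $T:X\to Y$ as in the hypothesis, $T$ is in particular surjective, so the Open Mapping Theorem applies. But then for every open $U\In X$ the preimage $(T^{-1})^{-1}(U)=T(U)$ is open in $Y$, which is exactly continuity of $T^{-1}:Y\to X$. Since a linear map between normed spaces is continuous if and only if it is bounded, this yields boundedness of $T^{-1}$.

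In this approach the entire burden is shifted onto the Open Mapping Theorem, and the main work lies there. The plan for proving the Open Mapping Theorem itself would be the classical Baire category argument: letting $B_X$ denote the closed unit ball of $X$, one writes $Y=\bigcup_{n\in\IN}\overline{T(nB_X)}$, which is legitimate because $T$ is surjective. Since $Y$ is a complete metric space, the Baire Category Theorem forces some $\overline{T(n_0B_X)}$ to have non-empty interior, and by linearity, symmetry and translation one then obtains a radius $r>0$ with $\overline{T(B_X)}\supseteq rB_Y$.

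The hard step is then removing the closure on the left-hand side, and this is where completeness of $X$ is used in an essential way. The plan would be a successive approximation argument: given $y\in rB_Y$, pick $x_1\in B_X$ with $||y-Tx_1||<r/2$, then $x_2$ of norm at most $1/2$ with $||y-Tx_1-Tx_2||<r/4$, and so on; the partial sums form a Cauchy sequence in $X$ whose limit $x$ satisfies $Tx=y$ and $||x||\leq 2$. This gives $T(2B_X)\supseteq rB_Y$ and hence $||T^{-1}||\leq 2/r$. I expect this closure-removal step to be the main obstacle, since it is the only place where completeness of the \emph{source} space enters; the Baire category step alone gives a dense-image statement and does not by itself provide any norm control of the preimage.
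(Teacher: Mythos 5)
Your argument is correct, and it is the standard textbook proof: deduce the Inverse Mapping Theorem from the Open Mapping Theorem, and prove the latter by writing $Y=\bigcup_{n}\overline{T(nB_X)}$, invoking Baire category on the complete space $Y$, using symmetry and convexity to centre the resulting ball at the origin, and then removing the closure by the geometric-series approximation argument (which is indeed the only place where completeness of the source space $X$ is needed, and which correctly yields the quantitative bound $\|T^{-1}\|\leq 2/r$). There is nothing to compare against here: the paper states this theorem purely as classical background for its computable and uniform variants (Propositions~\ref{prop:BIM}--\ref{prop:BIM3} and Theorem~\ref{thm:CN}) and offers no proof of its own, so your write-up simply supplies the classical argument the paper takes for granted. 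It is perhaps worth observing that the paper's Theorem~\ref{thm:CN} places the Open Mapping, Closed Graph, Inverse Mapping and Baire Category Theorems in a single Weihrauch equivalence class, which is the uniform-computability counterpart of exactly the chain of implications your proof exploits.
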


Here we obtain the following computable version \cite{Bra09}.

\begin{proposition}[B.\ 2009]
\label{prop:BIM}
If $T:X\to Y$ is a computable, bijective and linear operator on computable Banach spaces $X,Y$, then
its inverse $T^{-1}:Y\to X$ is computable too.
\end{proposition}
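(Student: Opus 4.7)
The plan is to build on the classical Banach Inverse Mapping Theorem~\ref{thm:BIM}, which guarantees a bounded linear $T^{-1}:Y\to X$, and to upgrade mere boundedness to effective computability. First, observe that because $T$ is computable and surjective, the image $(Ta_k)_k$ of a dense computable sequence $(a_k)_k$ of $X$ is a dense computable sequence of $Y$. Given a name of $y\in Y$ and a rational $\varepsilon>0$, we can therefore search effectively for some index $k$ with $\|Ta_k-y\|<\varepsilon$; by the continuity of $T^{-1}$ the point $a_k$ then satisfies $\|a_k-T^{-1}y\|\leq\|T^{-1}\|\cdot\varepsilon$.

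Consequently, computability of $T^{-1}$ reduces to producing a computable upper bound $c$ for the operator norm $\|T^{-1}\|$ from a name of $T$: once $c$ is in hand, running the search above with precision $\varepsilon=2^{-n}/c$ yields a fast Cauchy name for $T^{-1}y$, uniformly in $y$. The classical route to such a bound goes through the Baire Category Theorem applied to the covering $Y=\bigcup_n\overline{T(B_X(0,n))}$: some closed set $\overline{T(B_X(0,n_0))}$ has nonempty interior containing a ball $B_Y(y_0,r)$, and a standard telescoping argument then yields $B_Y(0,r)\In T(B_X(0,4n_0))$, whence $\|T^{-1}\|\leq 4n_0/r$.

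The genuine obstacle is to extract such a pair $(n_0,r)$ effectively, since the Baire Category Theorem is not uniformly computable in general. Here the linear structure together with completeness saves us: each set $\overline{T(B_X(0,n))}$ is c.e.\ closed, convex and symmetric, and these sets scale homogeneously in $n$. One can therefore enumerate candidates $(n,r)\in\IN\times\IQ$ in parallel with finite rational configurations witnessing that sufficiently many points of $B_Y(0,r)$ lie within distance $r/2$ of $T(B_X(0,n))$; by classical BCT some such witness must eventually appear, and its appearance is semidecidable from the name of $T$. Once found, the telescoping step is carried out effectively inside the computable Banach space $X$ to yield a computable $c$ with $\|T^{-1}\|\leq c$. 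Combined with the algorithm of the first paragraph this shows that $T^{-1}$ is computable, proving the proposition.
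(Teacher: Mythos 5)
Your first two paragraphs are sound and set up the right reduction: since $(Ta_k)_k$ is a computable dense sequence in $Y$, a single constant $c\geq\|T^{-1}\|$ turns the exhaustive search into an algorithm producing a fast Cauchy name of $T^{-1}y$. The gap is in the third paragraph. First, the proposed witness does not exist: in an infinite-dimensional Banach space the ball $B_Y(y_0,r)$ is not totally bounded, so no \emph{finite} rational configuration of test points can certify that every point of the ball lies within $r/2$ of $T(B_X(0,n))$; the inclusion you need to detect is a universal statement over a non-compact set and is not semidecidable from an enumeration of a dense subset of $T(B_X(0,n))$. Second, and more decisively, no repair of this step can succeed: if the pair $(n_0,r)$ (equivalently, a bound on $\|T^{-1}\|$) were obtainable uniformly from a name of $T$, your construction would compute the map $T\mapsto T^{-1}$, contradicting Proposition~\ref{prop:BIM2}; indeed by Theorem~\ref{thm:CN} the uniform content of inversion is exactly $\C_{\IN}$, so the step of guessing one natural number is precisely where non-computability is concentrated and cannot be made semidecidable.

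The statement you are proving is, however, non-uniform, so you do not need to extract $(n_0,r)$ effectively at all. The classical Baire category/open mapping argument guarantees that \emph{some} $c\in\IN$ with $\|T^{-1}\|\leq c$ exists; for the fixed operator $T$ this single integer may simply be hardwired into the algorithm of your first paragraph as a non-uniform constant, yielding a program for $T^{-1}$. That is exactly what computability of $T^{-1}$, as opposed to computability of the operation $T\mapsto T^{-1}$, requires. Deleting your third paragraph and replacing it with this existence observation turns your argument into essentially the proof given in the cited reference.
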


That is, every bijective and linear operator $T$ with a program admits also a program for its inverse $T^{-1}$,
but there is not general method to compute such a program for $T^{-1}$ from a program for $T$ in general
as the following result shows \cite{Bra09}.

\begin{proposition}[B.\ 2009]
\label{prop:BIM2}
Inversion ${\mathsf{BIM}}:\In\CC(\ell_2,\ell_2)\to\CC(\ell_2,\ell_2),T\mapsto T^{-1}$ restricted to bijective, linear and bounded $T:\ell_2\to\ell_2$
is not computable (and not even continuous).
\end{proposition}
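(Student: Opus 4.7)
The plan is to establish the stronger claim, namely topological discontinuity of $\mathsf{BIM}$, since every computable map between represented spaces is continuous. The strategy is to exhibit a single sequence of bijective bounded linear operators $(T_n)_n$ on $\ell_2$ that converges to the identity in the compact-open topology on $\CC(\ell_2,\ell_2)$, while the sequence $(T_n^{-1})_n$ of inverses fails to converge (in fact diverges in norm on a fixed compact set). This directly contradicts continuity of $\mathsf{BIM}$ at $\id$.

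For the construction, let $(e_i)_i$ be the standard orthonormal basis of $\ell_2$ and let $T_n$ be the diagonal operator with $T_ne_n=\tfrac{1}{n}e_n$ and $T_ne_i=e_i$ for $i\neq n$. Each $T_n$ is linear, bounded, and bijective with $\|T_n\|=1$, and its inverse $T_n^{-1}$ is the diagonal operator with $T_n^{-1}e_n=ne_n$ and $T_n^{-1}e_i=e_i$ for $i\neq n$. Note that $\|T_nx-x\|=(1-1/n)|\langle x,e_n\rangle|$ while $\|T_n^{-1}x-x\|=(n-1)|\langle x,e_n\rangle|$.

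The first main step is to verify that $T_n\to\id$ uniformly on every compact $K\In\ell_2$. Here I would invoke the standard characterization of compactness in $\ell_2$: the coordinate tails $\sup_{x\in K}\sum_{i\geq m}|\langle x,e_i\rangle|^2$ tend to $0$ as $m\to\infty$, so in particular $\sup_{x\in K}|\langle x,e_n\rangle|\to 0$. Combined with the estimate above, this gives $\sup_{x\in K}\|T_nx-x\|\to 0$, i.e.\ $T_n\to\id$ in the compact-open topology. The second main step is to exhibit a fixed compact set $K$ witnessing the failure $T_n^{-1}\not\to\id$. Take
$$K:=\{0\}\cup\bigl\{e_n/\sqrt{n-1}:n\geq 2\bigr\}\In\ell_2,$$
which is compact because $e_n/\sqrt{n-1}\to 0$ in norm. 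Evaluating at the point $e_n/\sqrt{n-1}\in K$ yields $\|T_n^{-1}(e_n/\sqrt{n-1})-e_n/\sqrt{n-1}\|=(n-1)/\sqrt{n-1}=\sqrt{n-1}\to\infty$, so uniform convergence on $K$ already fails quantitatively.

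Putting the two steps together, $\mathsf{BIM}$ sends the convergent sequence $T_n\to\id$ to a divergent sequence $T_n^{-1}\not\to\id=\id^{-1}$, hence $\mathsf{BIM}$ is discontinuous at the identity and a fortiori not computable. The main obstacle I anticipate is purely bookkeeping: making sure the topology underlying the representation of $\CC(\ell_2,\ell_2)$ used in Proposition~\ref{prop:BIM} agrees with the compact-open topology (as the paper explicitly stipulates), so that topological discontinuity genuinely rules out computability; once this is cited, the remainder is an elementary $\ell_2$-computation.
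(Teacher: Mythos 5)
Your proof is correct and follows what is essentially the standard argument for this proposition (the paper itself defers the proof to \cite{Bra09}): the diagonal perturbations $T_n$ with $T_ne_n=\frac{1}{n}e_n$ converge to the identity uniformly on compacta while the inverses blow up on the compact set $\{0\}\cup\{e_n/\sqrt{n-1}:n\geq2\}$, which rules out sequential continuity with respect to continuous convergence and hence any continuous realizer, a fortiori computability. Your own caveat is the only point needing a citation, and it is handled by the standard facts that the function space representation is admissible with respect to continuous convergence and that, for sequences of continuous maps on a metric domain, continuous convergence coincides with uniform convergence on compact sets.
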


Analogously, there is also a sequential counterexample \cite{Bra09}.

\begin{proposition}[B.\ 2009]
\label{prop:BIM3}
There exists a computable sequence $(T_n)_n$ of computable, bijective and linear operators $T_n:\ell_2\to\ell_2$
such that the sequence $(T_n^{-1})_n$ of their inverses is not computable.
\end{proposition}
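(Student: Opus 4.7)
The plan is to derive the sequential result directly from the already-established non-uniform non-computability of inversion (Proposition~\ref{prop:BIM2}). The underlying principle, common in computable analysis, is that for represented spaces with an admissible representation --- and $\CC(\ell_2,\ell_2)$ equipped with its compact-open representation qualifies --- a partial function is computable if and only if it is sequentially computable in the effective sense, i.e., it maps every computable sequence in its domain uniformly to a computable sequence in its range. Since $\BIM$ is not computable on bijective, linear, bounded operators by Proposition~\ref{prop:BIM2}, this equivalence immediately yields a computable sequence $(T_n)_n$ in $\dom(\BIM)$ for which $(\BIM(T_n))_n = (T_n^{-1})_n$ fails to be a computable sequence, giving the result.

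\textbf{Concrete construction.} To avoid relying on this abstract equivalence and make the counterexample explicit, I would encode a Specker-type non-computable real into the sequence of inverse norms. Using Proposition~\ref{prop:MCT}, fix a computable monotone increasing sequence $(s_n)_n \In [0,\tfrac{1}{2}]$ whose supremum $s$ is not computable. The operators $T_n$ would be built as small rank-one perturbations of the identity on $\ell_2$ using the standard basis $(e_k)_k$, with parameters read off $(s_n)_n$ so that $\|T_n^{-1}\|$, or the value of $T_n^{-1}e_0$ in some fixed coordinate, grows like $1/(s-s_n)$. A uniform procedure producing $T_n^{-1}$ would then yield increasingly tight upper bounds on $s-s_n$, hence a computation of $s$ from above, contradicting the choice of the Specker sequence. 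Each individual $T_n^{-1}$ is nevertheless computable by Proposition~\ref{prop:BIM}, consistent with the statement.

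\textbf{Main obstacle.} The principal difficulty is that whenever $(T_n)_n$ is uniformly computable \emph{and} $(\|T_n^{-1}\|)_n$ admits a computable upper bound, a Neumann-series argument upgrades Proposition~\ref{prop:BIM} to a uniform statement, and the inverses automatically form a computable sequence. Hence the counterexample is forced to have $\|T_n^{-1}\|$ unbounded, or at least without a computable upper bound in $n$, and the technical work is to calibrate the blow-up so that it is both (i) slow enough that each $T_n$ remains a bijection with bounded inverse, and (ii) fast enough to transport non-computability into the sequence of inverses. This is precisely where the Specker-style encoding does the work, and it explains why Proposition~\ref{prop:BIM3} strictly strengthens the non-uniform Proposition~\ref{prop:BIM}.
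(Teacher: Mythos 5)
Your ``plan'' rests on a principle that is false in the direction you need it. For admissibly represented spaces it is true that a computable map sends computable sequences in its domain to computable sequences, but the converse fails: a map can be non-computable (even discontinuous) solely because of its behaviour on non-computable inputs, so non-computability of $\BIM$ (Proposition~\ref{prop:BIM2}) does not by itself produce a computable sequence $(T_n)_n$ with $(T_n^{-1})_n$ non-computable. The paper itself illustrates why this gap matters: $\IVT$ is not computable as a uniform map, yet every computable instance has a computable zero (Proposition~\ref{prop:IVT}), and the sequential failure (Proposition~\ref{prop:IVT2}) required a separate construction with computably inseparable c.e.\ sets. Likewise here, the passage from Proposition~\ref{prop:BIM2} to Proposition~\ref{prop:BIM3} is exactly the content that has to be proved; your first argument begs the question. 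The route the paper actually indicates is via the Weihrauch classification: $\widehat{\BIM}\equivW\lim$ (Theorem~\ref{thm:lim}), so instantiating the reduction $\lim\leqW\widehat{\BIM}$ at a computable point of $\dom(\lim)$ with non-computable limit yields, through the computable inner reduction function $K$, the desired computable sequence of operators; the detailed construction is in \cite{Bra09}.

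Your concrete construction is the right kind of idea --- it is in the spirit of how $\C_\IN\leqW\BIM$ is actually proved --- but the decisive step is missing. To have $\|T_n^{-1}\|$ behave like $1/(s-s_n)$ you must smuggle the non-computable real $s$ into a uniformly computable family $(T_n)_n$; the natural way (a diagonal or rank-one perturbation with entries $1-(s_k-s_n)$, $k\in\IN$) gives $\|T_n^{-1}\|=\sup_k (1-(s_k-s_n))^{-1}=(1-(s-s_n))^{-1}$, but this is useless: the operator norm is only lower semi-computable from a name of $T_n^{-1}$, and $s$ is already left-c.e.\ as the supremum of a computable increasing sequence, so no contradiction arises. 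You must instead arrange that the non-computable information is recoverable from \emph{evaluations} $T_n^{-1}(x)$ at fixed computable vectors $x$ (e.g.\ a distinguished coordinate of $T_n^{-1}e_0$), and making that single number non-computable while each $T_n$ stays uniformly computable is precisely the technical heart of the proof, which your sketch does not supply. Your structural remark in the last paragraph --- that the counterexample is forced to have $\|T_n^{-1}\|$ without a computable upper bound in $n$, since such a bound lets one compute $T^{-1}$ from $T$ --- is correct and is indeed the reason $\BIM$ sits in the class of $\C_\IN$ (Theorem~\ref{thm:CN}): the missing information is exactly one natural number per instance.
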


Hence, in several respects the Banach Inverse Mapping Theorem behaves similarly to the Intermediate Value Theorem:
it is non-uniformly computable, but not uniformly computable. Yet we will see that the uniform content of both theorems is different.
 
With our final example of a theorem from functional analysis we want to close the circle
and mention a result that behaves similarly to the Monotone Convergence Theorem~\ref{thm:MCT},
namely the Fr{\'e}chet-Riesz Representation Theorem.

\begin{theorem}[Fr{\'e}chet-Riesz Representation Theorem]
\label{thm:FRT}
For every linear bounded functional $f:H\to\IR$ on a Hilbert space $H$ there exists a unique $y\in H$
such that $f=f_y$ and $||f||=||y||$, where $f_y:H\to\IR,x\mapsto\langle x,y\rangle$. 
\end{theorem}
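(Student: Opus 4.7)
The plan is to exploit the fact that the closed kernel $N := \{x \in H : f(x) = 0\}$ of a bounded linear functional admits an orthogonal complement in $H$, which lets me identify a single privileged direction from which the representer $y$ can be read off. For $f \equiv 0$ the choice $y = 0$ is trivially unique, so I focus on the case $f \neq 0$. Continuity of $f$ ensures that $N$ is a closed proper subspace of $H$.

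The central step is the Projection Theorem: for the closed subspace $N \In H$ there is an orthogonal decomposition $H = N \oplus N^\perp$ with $N^\perp \neq \{0\}$ since $N$ is proper. I would pick a unit vector $z_0 \in N^\perp$; then $f(z_0) \neq 0$, since otherwise $z_0 \in N \cap N^\perp = \{0\}$, contradicting $\|z_0\|=1$, and I set $y := f(z_0)\, z_0$. To verify $f = f_y$, observe that for arbitrary $x \in H$ the vector $x - \frac{f(x)}{f(z_0)} z_0$ has $f$-value zero, hence lies in $N$, hence is orthogonal to $z_0$; using bilinearity of the inner product and $\|z_0\|=1$, this yields $\langle x, z_0 \rangle = \frac{f(x)}{f(z_0)}$, and therefore $\langle x, y \rangle = f(z_0)\langle x, z_0\rangle = f(x)$.

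Uniqueness is straightforward: if $f_{y_1} = f_{y_2}$, then $\langle x, y_1 - y_2 \rangle = 0$ for all $x \in H$, and choosing $x := y_1 - y_2$ forces $y_1 = y_2$. For the norm identity, the Cauchy--Schwarz inequality gives $|f(x)| = |\langle x, y\rangle| \leq \|y\| \cdot \|x\|$, so $\|f\| \leq \|y\|$; conversely, evaluating at $x := y/\|y\|$ when $y \neq 0$ yields $f(y/\|y\|) = \|y\|$, establishing the reverse inequality, while the case $y = 0$ is trivial.

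The main obstacle is the existence of the unit vector $z_0 \in N^\perp$: this rests on the Projection Theorem, which itself depends on the parallelogram identity, the closest-point property for closed convex subsets, and the completeness of $H$. Everything else --- the identity $f_y = f$, uniqueness, and the norm equality --- is a routine computation in the inner product space.
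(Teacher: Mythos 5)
Your proof is correct and complete: the reduction to the kernel $N=f^{-1}\{0\}$, the use of the Projection Theorem to produce a unit vector $z_0\in N^\perp$, the verification that $y:=f(z_0)z_0$ represents $f$, and the uniqueness and norm computations via Cauchy--Schwarz are all sound (and, since the theorem is stated over $\IR$, no complex conjugation issues arise). Note that the paper itself offers no proof of this statement --- it is quoted as a classical background theorem from functional analysis, and the paper's actual contribution concerning it is the computability analysis (Proposition~\ref{prop:FRR} and Theorem~\ref{thm:FRR-MCT}); your argument is the standard textbook proof and is exactly what the paper implicitly relies on.
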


Here $\langle\;,\;\rangle$ denotes the inner product of the Hilbert space $H$. 
For every computable $y\in H$ the functional $f_y$ is computable with norm $||f_y||=||y||$.
Since the norm $||y||$ is always computable for a computable $y\in H$, it is immediately clear that it suffices to construct a computable functional $f:H\to\IR$ without
a computable norm $||f||$ in order to show that this theorem cannot hold computably. 
In fact, a Specker-like counterexample suffices in this case \cite{BY06}.

\begin{proposition}[B.\ and Yoshikawa 2006]
\label{prop:FRR}
There exists a computable functional $f:\ell_2\to\IR$ such that $||f||$ is not computable and
hence there cannot be a computable $y\in H$ with $f=f_y$ and $||f||=||y||$.
\end{proposition}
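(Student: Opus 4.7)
The plan is to exploit one direction of the Fr\'echet--Riesz correspondence: every computable $y \in H$ yields a computable functional $f_y$ with the computable norm $||f_y|| = ||y||$. Hence, in order to prevent the existence of a computable $y$ with $f=f_y$ and $||f||=||y||$, it is enough to exhibit a single computable functional $f:\ell_2 \to \IR$ whose operator norm fails to be computable. A failure of $||f||$ to be computable then automatically rules out any computable representing vector.

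I would construct the functional by specifying its coordinates with a Specker-style device. Let $K\subseteq \IN$ be a computably enumerable but non-computable set, and let $(K_n)_n$ be a computable enumeration of $K$. The rational numbers $s_n := \sum_{k\in K_n} 4^{-k}$ form a computable, non-decreasing, bounded sequence whose supremum $s:=\sum_{k\in K}4^{-k}$ is not computable, since $K$ could be decided from $s$ by comparison with the uniformly computable lower bounds $s_n$. Now set $a_n := \sqrt{s_n - s_{n-1}}$ for $n\geq 0$ with the convention $s_{-1}:=0$. This is a computable sequence of nonnegative reals satisfying $\sum_{n \leq N} a_n^2 = s_N$, and hence $\sum_n a_n^2 = s$.

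Now I would define $f:\ell_2 \to \IR$ by $f(x):=\sum_n a_n x_n$, the inner product with the vector $(a_n)_n\in \ell_2$. By the Fr\'echet--Riesz Theorem, $||f||^2 = \sum_n a_n^2 = s$ is a non-computable real, so $||f||$ itself is not computable. The substantive step is to verify that $f$ is nevertheless a computable functional: given a name of a computable point $x\in\ell_2$, one must output $f(x)$ with arbitrary precision. The key estimate is Cauchy--Schwarz applied to the tail,
\[
\Bigl| f(x) - \sum_{n \leq N} a_n x_n \Bigr| \leq \Bigl( \sum_{n > N} a_n^2 \Bigr)^{1/2} \Bigl( \sum_{n > N} x_n^2 \Bigr)^{1/2} \leq \sqrt{s}\; \Bigl( \sum_{n > N} x_n^2 \Bigr)^{1/2},
\]
and although $\sqrt{s}$ itself is not computable, it is bounded by the computable constant $\sqrt{4/3}$. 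The tail norm $\bigl(\sum_{n>N}x_n^2\bigr)^{1/2}$ of a computable point of $\ell_2$ can be effectively driven below any prescribed $2^{-k}$ using a finitely supported rational approximation of $x$, so the right-hand side becomes computably small and the partial sums $\sum_{n\leq N}a_n x_n$, which are uniformly computable in $N$, converge to $f(x)$ at an effective rate.

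The conceptual obstacle I expect is precisely the non-effectivity of $\sum_{n>N} a_n^2 \to 0$: any attempt to bound this tail directly would amount to computing $s$. The escape is to replace the tail norm of $(a_n)$ by its \emph{total} norm, which is available as a computable majorant $\sqrt{4/3}$, and to let all the effective convergence come from the tail of $x$. The resulting asymmetry --- $f$ is computable while $||f||$ is not --- is precisely the phenomenon captured by the proposition and mirrors the classical Specker picture of Proposition~\ref{prop:MCT}.
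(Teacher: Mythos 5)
Your construction is correct and is essentially the paper's own argument (the one sketched via \cite[Example~4.6]{BY06} inside the proof of Theorem~\ref{thm:FRR-MCT}): your vector with $a_n=\sqrt{s_n-s_{n-1}}=2^{-k}$ for the element $k$ enumerated at stage $n$ is the same Specker-type vector $a\in\ell_2$ whose associated functional $\langle\cdot,a\rangle$ is computable while $\|a\|^2=\sum_{k\in K}4^{-k}$ is not. Your explicit Cauchy--Schwarz tail estimate, with the computable majorant $\sqrt{4/3}$ replacing the non-effective tail of $(a_n)$, correctly supplies the one point the paper leaves implicit, namely why $f$ is computable even though $a$ is not a computable point of $\ell_2$.
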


\section{A Classification Scheme for Theorems}

If we look at the different examples of theorems that we have presented in the preceding sections then
it becomes clear that theorems can behave quite differently with respect to computability.
For one, the uniform and the non-uniform behavior can differ and the levels of computability
can be of different complexities (computable, low, limit computable, etc.). 
On the other hand, certain theorems seem to be quite similar to each other, for instance
the Monotone Convergence Theorem is similar in its behavior to the Fr{\'e}chet-Riesz Theorem.

This naturally leads us to the question whether there is a classification scheme that allows
to derive all sorts of computability properties of a theorem once it has been classified
according to the corresponding scheme. The best known classification scheme for theorems
in logic is Reverse Mathematics, i.e., the project to classify theorems in second order arithmetic
according to certain axioms that are required to prove the corresponding theorem \cite{Sim09}.
It turns out that this classification scheme is not fine enough for our purposes, because it
only captures theorems in a non-uniform sense. In order to preserve computability properties
such as lowness that are not closed under product, we also need a classification scheme that
is more resource sensitive than reverse mathematics.

Such a classification scheme has been developed over the previous eight years using the concept
of Weihrauch reducibility \cite{GM09,Pau10,Pau10a,BG11,BG11a,BBP12,BGM12,BLP12,BGH15a}.
If $X,Y,Z,W$ are represented spaces, then $f:\In X\mto Y$ is {\em Weihrauch reducible} to $g:\In Z\mto W$, if there
are computable multi-valued functions $H,K$ such that $\emptyset\not=H(x,gK(x))\In f(x)$ for all $x\in\dom(f)$.
In symbols we write $f\leqW g$ in this situation. If the reduction works in both directions,
then we write $f\equivW g$.  It can be shown that this reducibility induces a lattice structure \cite{BG11,Pau10a}.

Now a theorem of logical form $(\forall x\in X)(x\in D\TO (\exists y\in Y)\;P(x,y))$ can  be interpreted
as a multi-valued function $f:\In X\mto Y,x\mapsto \{y\in Y:P(x,y)\}$ with $\dom(f)=D$. For instance,
we obtain the following multi-valued functions for the theorems that we have considered 
in the previous sections (some of which are formulated in greater generality here). 
Here $\Tr$ denotes the set of binary trees and $[T]$ the set of infinite paths of a tree $T$.

\begin{itemize} 
\item $\MCT:\In\IR^\IN\to\IR,(x_n)_n\mapsto\sup_{n\in\IN}x_n$ restricted to monotone bounded sequences.
\item $\IVT:\In\CC[0,1]\mto[0,1],f\mapsto f^{-1}\{0\}$ with $\dom(\IVT):=\{f:f(0)\cdot f(1)<0\}$.
\item $\MAX_X:\In\CC(X)\mto\IR,f\mapsto\{x\in X:f(x)=\max f(X)\}$ for computably compact\footnote{See \cite{BP03} for a definition of computably compact.}
computable metric spaces $X$ and, in particular, for $X=[0,1]$.
\item $\WKL:\In\Tr\mto2^\IN,T\mapsto[T]$ restricted to infinite binary trees.
\item $\BFT_n:\CC([0,1]^n,[0,1]^n)\mto[0,1]^n,f\mapsto\{x:f(x)=x\}$ for $n\geq1$.
\item $\BWT_X:\In X^\IN\mto X,(x_n)_n\mapsto\{x:x$ cluster point of $(x_n)_n\}$, restricted to sequences that are in a compact subset of $X$. 
\item $\BIM_{X,Y}:\In\CC(X,Y)\to\CC(Y,X),T\mapsto T^{-1}$, restricted to bijective, linear, bounded $T$ and for computable Banach spaces $X,Y$.
\item $\FRR_H:\In\CC(H)\to H,f_y\mapsto y$ for computable Hilbert spaces $H$.
\item $\Z_X:\In\CC(X)\to\IR,f\mapsto f^{-1}\{0\}$ for computable metric spaces $X$.
\end{itemize}

We have not formalized the Hahn-Banach Theorem here and point the reader to \cite{GM09}.
The last mentioned problem $\Z_X$ is the {\em zero problem}, which is the problem to find a zero of a continuous function that admits at least one zero.
By \cite[Theorem~3.10]{BP03} we obtain $\Z_X\equivW\C_X$ for the {\em choice problem} of every computable metric space. 
We are not going to define $\C_X$ here, but whenever we use it we will actually take $\Z_X$ as a substitute for it.
The following equivalences were proved in \cite{BG11a}:

\begin{theorem}[Choice on the natural numbers]
\label{thm:CN}
The following are all Weih\-rauch equivalent to each other and complete among functions that are computable with finitely many mind changes \cite{BBP12}:
\begin{enumerate}
\item Choice on natural numbers $\C_{\IN}$.
\item The Baire Category Theorem (in appropriate formulation).
\item Banach's Inverse Mapping Theorem $\BIM_{\ell_2,\ell_2}$.
\item The Open Mapping Theorem for $\ell_2$.
\item The Closed Graph Theorem for $\ell_2$.
\item The Uniform Boundedness Theorem for $\ell_2$.
\end{enumerate}
\end{theorem}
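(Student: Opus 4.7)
The plan is to establish pairwise equivalence of all six problems by sandwiching each between $\C_\IN$ and $\C_\IN$; once this is done, the completeness claim for problems computable with finitely many mind changes is inherited from the characterisation of $\C_\IN$ in \cite{BBP12}. The upper half, $X \leqW \C_\IN$, will go through one common intermediate, namely BCT; the lower half, $\C_\IN \leqW X$, will be handled by direct encoding constructions tailored to each theorem.

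For $\BCT \leqW \C_\IN$, I would work in an effective Polish space with a computable basis $(B_k)_k$ and, given an enumeration of closed nowhere dense sets $(A_n)_n$, enumerate into a c.e.\ ``bad'' set the indices of basic neighbourhoods that are eventually exhausted by the $A_n$. The complement is a co-c.e.\ set of ``safe'' neighbourhoods, infinite by the classical BCT. A single $\C_\IN$-query, together with the self-compositionality $\C_\IN$ enjoys under sequential products, picks out a nested chain of shrinking safe neighbourhoods whose intersection is a point avoiding every $A_n$. Each of $\BIM_{\ell_2,\ell_2}$, OMT, CGT, UBT then reduces to BCT by lifting its textbook Baire-category proof to a uniform Weihrauch reduction: for $\BIM_{\ell_2,\ell_2}$ the closed cover $\ell_2 = \bigcup_n \overline{T(nB)}$ (with $B$ the closed unit ball) sends BCT to an index with non-empty interior, producing a computable upper bound for $\|T^{-1}\|$ and hence a program for $T^{-1}$; OMT and CGT are handled analogously and are classically equivalent to BIM via uniformly computable transformations of data; UBT reduces to BCT applied to the closed level sets $\{x : \sup_n \|T_n x\| \leq k\}$.

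For the lower reductions, the unifying idea is to hide, inside an instance of each theorem, a co-c.e.\ set $A = \IN \setminus \range(p)$ whose extraction requires exactly one $\C_\IN$-query. For $\BIM_{\ell_2,\ell_2}$ I would build a diagonal operator $T_p e_n = c_n^p e_n$ with coefficients computable in $p$ and taking values in a fixed positive interval, so that $T_p$ is uniformly bounded, bijective, and its inverse is a genuine bounded operator; but the pattern of ``large'' versus ``small'' coefficients encodes $A$ in such a way that any name of $T_p^{-1}$ must implicitly supply a computable modulus of continuity, and any such bound pinpoints some $n \in A$. Analogous diagonal constructions handle OMT, CGT and UBT (using families of operators with controlled norms, graphs and pointwise suprema), and $\C_\IN \leqW \BCT$ follows by encoding $A$ as closed nowhere dense sets in Baire space so that any avoidance point is forced to have first coordinate in $A$.

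The main obstacle is the fine calibration of the lower-reduction constructions: on one hand the instance $T_p$ must satisfy the full classical hypotheses of BIM (bounded, bijective) for \emph{every} input $p$, including pathological ones; on the other hand $T_p$ must depend uniformly computably on $p$ and conceal exactly one $\C_\IN$'s worth of non-computability---too much and the reduction would demand a stronger oracle, too little and the lower bound would fall short of $\C_\IN$. A secondary but real issue is picking the ``appropriate formulation'' of BCT (how the closed nowhere dense sets are presented, which ambient space is fixed) so that the upper and lower halves dovetail; the workable choice is essentially to present the sets by their complements as c.e.\ open sets in an effective Polish space.
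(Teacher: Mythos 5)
The paper itself offers no proof of this theorem: it is quoted from \cite{BG11a} (with the completeness clause from \cite{BBP12}), so your sketch can only be compared with the arguments in those sources. Your overall architecture --- sandwiching each theorem between two copies of $\C_\IN$, routing the upper bounds through the Baire Category Theorem by effectivizing the classical proofs, and obtaining the lower bounds by hiding a co-c.e.\ set inside a tailor-made instance --- is indeed the architecture of \cite{BG11a,Bra09}. But two of your concrete choices would make the argument fail. The more serious one is your ``appropriate formulation'' of $\BCT$: you settle on the point-finding version with the closed nowhere dense sets given by negative information (complements as c.e.\ open sets), and that version is \emph{computable} outright \cite{Bra01b}, so $\C_\IN$ does not reduce to it and item~2 would be false as you set it up. The formulation equivalent to $\C_\IN$ is the \emph{index} version: given closed sets $A_n$ (negative information) with $\bigcup_n A_n=X$, find $k$ such that $A_k$ is somewhere dense; here the set of pairs $(k,j)$ with basic ball $B_j\In A_k$ is co-c.e.\ and nonempty, so a single $\C_\IN$-query suffices. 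This is in fact the version your own upper bounds tacitly use (``sends BCT to an index with non-empty interior''), so your sketch is internally inconsistent. Moreover, your reduction of the point-finding version to $\C_\IN$ via ``a nested chain of shrinking safe neighbourhoods'' would need countably many sequential queries, which yields $\widehat{\C_\IN}\equivW\lim$ rather than $\C_\IN$, and the ``bad'' set is not c.e.\ as described: no basic ball is ever exhausted by finitely many nowhere dense closed sets, since a finite union of such sets is still nowhere dense.

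The second gap is in the lower bound $\C_\IN\leqW\BIM_{\ell_2,\ell_2}$. A diagonal operator whose coefficients lie in a \emph{fixed} positive interval $[a,b]$ has $\|T_p^{-1}\|\leq 1/a$ known in advance, and a bijective bounded linear operator together with an explicit bound on its inverse has a \emph{uniformly} computable inverse; such an instance therefore conceals no non-computability at all, and no ``modulus of continuity'' argument can extract anything from it. The construction must instead arrange that $\inf_n c_n^p$ is positive for every input $p$ (for diagonal operators surjectivity already forces this) but not computable from $p$ --- for instance $\inf_n c_n^p=2^{-\min A}$ for the hidden co-c.e.\ set $A$ --- and the recovery of an element of $A$ from a name of $T_p^{-1}$ has to be engineered explicitly rather than deduced generically. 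This calibration is precisely the content of \cite{Bra09}; you correctly identify it as the main obstacle but resolve it in a way that cannot work. The remaining pieces (the $\C_\IN$-completeness transfer from \cite{BBP12}, and the analogous treatment of the Open Mapping, Closed Graph and Uniform Boundedness Theorems) are fine as sketched once these two points are repaired.
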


Hence the equivalence class of choice $\C_\IN$ on the natural numbers contains many theorems that are typically
proved with and closely related to the Baire Category Theorem.

We prove that the Theorem of the Maximum is equivalent to the zero problem of $[0,1]$.

\begin{theorem}
\label{thm:MAX-WKL}
$\MAX_X\equivW\Z_X$ for every computably compact computable metric space $X$.
\end{theorem}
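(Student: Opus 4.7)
The plan is to establish both reductions by simple computable transformations between continuous functions on $X$, with the solution side of each Weihrauch reduction being the identity (i.e.\ $H(x,y)=y$).

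For the direction $\MAX_X\leqW\Z_X$, my idea is, on input $f\in\CC(X)$, to compute the maximum value $M:=\max f(X)\in\IR$ and then form the continuous function $g:=M-f$. Since $X$ is computably compact, the map $f\mapsto\max f(X)$ is computable $\CC(X)\to\IR$, and subtraction of a computable real from a continuous function is computable on $\CC(X)$; hence $g$ is obtained computably from $f$. Now $g(x)=0$ iff $f(x)=M=\max f(X)$, so the zero set of $g$ is exactly the set of maximizers of $f$, and it is non-empty by compactness of $X$. Thus $\Z_X(g)\In\MAX_X(f)$, which is the required reduction (the inner reduction witness $K$ sends $f$ to $g$, and the outer witness is the identity).

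For the direction $\Z_X\leqW\MAX_X$, on input $f\in\CC(X)$ with $f^{-1}\{0\}\not=\emptyset$, form $g:=-|f|\in\CC(X)$. The map $f\mapsto-|f|$ is computable on $\CC(X)$ since absolute value and negation are computable real-number operations. Because $f$ has a zero, $\max g(X)=0$, and this maximum is attained at precisely those points $x$ with $f(x)=0$. Therefore every maximizer of $g$ is a zero of $f$, i.e.\ $\MAX_X(g)\In\Z_X(f)$, which again gives the reduction with $K:f\mapsto g$ and $H$ the projection.

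The only step that is not purely algebraic is the use of computable compactness of $X$ to conclude that $f\mapsto\max f(X)$ is computable; this is a standard fact about computably compact computable metric spaces and is what forces the hypothesis on $X$. Everything else reduces to the fact that the continuous operations $(f,c)\mapsto c-f$ and $f\mapsto-|f|$ are computable on the represented space $\CC(X)$, and to the elementary observations about zeros and maximizers stated above.
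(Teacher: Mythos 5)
Your proof is correct and follows essentially the same route as the paper: $g:=-|f|$ for the reduction $\Z_X\leqW\MAX_X$, and subtracting the (computably obtainable, by computable compactness) maximum value for the reduction $\MAX_X\leqW\Z_X$; your $g:=M-f$ is just the negative of the paper's $g:=f-\max f(X)$ and has the same zero set. The identification of the outer reduction witness with the identity also matches the paper's implicit argument.
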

\begin{proof}
We prove $\Z_X\leqW\MAX_X$. 
Given a continuous function $f:X\to\IR$ with $A=f^{-1}\{0\}\not=\emptyset$,
we can compute the function $g:X\to\IR$ with $g:=-|f|$. Then $\MAX(g)=f^{-1}\{0\}=A$. 
This proves the claim.
We now prove $\MAX_X\leqW\Z_X$. 
Given a continuous function $f:X\to\IR$ with $\MAX(f)=A\not=\emptyset$, we can
compute $g:X\to\IR$ with $g:=f-\max f(X)$, since $X$ is computably compact. 
Now we obtain $g^{-1}\{0\}=\MAX(f)=A$.
This proves the claim. 
\hfill$\Box$
\end{proof}

We now arrive at the following result that is compiled from different sources. 
It shows that the equivalence class of choice on Cantor space
contains several problems whose non-computably was proved with the help of Weak K\H{o}nig's Lemma or
with the help of two c.e.\ sets that are computably inseparable. 
We point out that the sequential version of the Intermediate Valued Theorem formulated in Proposition~\ref{prop:IVT2}
can be modeled by {\em parallelization}. For $f:\In X\mto Y$ we define its {\em parallelization}  
$\widehat{f}:\In X^\IN\mto Y^\IN,(x_n)_n\mapsto\bigtimes_{n=0}^\infty f(x_n)$, 
which lifts $f$ to sequences. Parallelization is a closure operation in the Weihrauch lattice \cite{BG11}.

\begin{theorem}[Choice on Cantor space]
\label{thm:C2N}
The following are all Weihrauch equivalent to each other and complete among non-deterministically computable functions with a binary sequence as advice \cite{BBP12}:
\begin{enumerate}
\item Choice on Cantor Space $\C_{2^\IN}$.
\item Weak K\H{o}nig's Lemma $\WKL$ \cite{GM09,BG11}.
\item The Theorem of the Maximum $\MAX_{[0,1]}$ (Theorem~\ref{thm:MAX-WKL}).
\item The Hahn-Banach Theorem (Gherardi and Marcone 2009) \cite{GM09}.
\item The parallelization $\widehat{\IVT}$ of the Intermediate Value Theorem \cite{BG11a}.
\item The Brouwer Fixed Point Theorem $\BFT_n$ for dimension $n\geq2$ \cite{BLP12}.
\end{enumerate}
\end{theorem}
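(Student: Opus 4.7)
The plan is to use $\C_{2^\IN}$ as the pivotal middle term and establish a Weihrauch equivalence between it and each of the other five problems. Four of these equivalences are already available in the literature and can be invoked directly: $\C_{2^\IN}\equivW\WKL$ is standard (see \cite{BG11,GM09}); the Hahn-Banach equivalence is proved by Gherardi and Marcone in \cite{GM09}; the parallelization equivalence $\C_{2^\IN}\equivW\widehat{\IVT}$ is from \cite{BG11a}; and the equivalence with $\BFT_n$ for $n\geq2$ is from \cite{BLP12}.

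The genuinely new link to establish is $\C_{2^\IN}\equivW\MAX_{[0,1]}$. I would proceed by chaining three equivalences. First, the preceding Theorem~\ref{thm:MAX-WKL} gives $\MAX_{[0,1]}\equivW\Z_{[0,1]}$. Second, \cite[Theorem~3.10]{BP03} applied with $X=[0,1]$ yields $\Z_{[0,1]}\equivW\C_{[0,1]}$. It then suffices to prove $\C_{[0,1]}\equivW\C_{2^\IN}$. For $\C_{2^\IN}\leqW\C_{[0,1]}$ I would use the standard computable embedding of Cantor space into $[0,1]$ (for instance the Cantor ternary map $(a_n)_n\mapsto\sum_{n\in\IN}2a_n3^{-n-1}$), which sends a non-empty closed subset of $\Cantor$ to a non-empty closed subset of $[0,1]$ and allows any chosen point in the image to be computably decoded back into the original binary sequence. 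For the converse direction I would use the computable surjection $b:\Cantor\to[0,1]$ given by binary expansion; the preimage of a non-empty closed subset of $[0,1]$ under $b$ is a non-empty closed subset of $\Cantor$, and applying $b$ to any chosen infinite path yields a point in the original set, with the non-injectivity of $b$ being harmless since equivalent binary expansions are mapped to the same real number.

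The completeness claim is precisely the characterization of $\C_{2^\IN}$ as the Weihrauch-complete problem among non-deterministically computable functions with a binary sequence as advice, which is established in \cite{BBP12}, and this property automatically transfers to every member of the equivalence class. The main technical obstacle is not any single reduction but rather the careful bookkeeping between the various representations involved (closed subsets of metric spaces, binary trees, continuous functions, functionals on Banach spaces, self-maps of cubes); most of this representational work is already carried out in the cited sources, so the core role of the proof is to assemble these reductions around the central pivot $\C_{2^\IN}$ and verify that the translation functionals required by Weihrauch reducibility are uniformly computable.
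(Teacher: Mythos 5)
Your proposal is correct and follows essentially the same route as the paper: the paper also assembles the theorem from the cited sources (items 1, 2, 4, 5, 6 and the completeness claim from \cite{GM09,BG11,BG11a,BLP12,BBP12}) and obtains item 3 via the chain $\MAX_{[0,1]}\equivW\Z_{[0,1]}$ (Theorem~\ref{thm:MAX-WKL}) and $\Z_{[0,1]}\equivW\C_{[0,1]}$ (\cite[Theorem~3.10]{BP03}), with $\C_{[0,1]}\equivW\C_{2^\IN}$ taken as a standard fact that you merely make more explicit.
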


We note that \cite{BLP12} contains the proof for the Brouwer Fixed Point Theorem only for dimension $n\geq3$
and the results for $n=2$ is due to Joseph Miller.  
It is easy to see that $\IVT\equivW\BFT_1$ \cite{BLP12}.
We mention that this result implies Propositions~\ref{prop:IVT2}, \ref{prop:MAX}, \ref{prop:WKL}, \ref{prop:BFT} and \ref{prop:HBT}
and constitutes a more general uniform classification. 
In some cases the proofs can easily be derived from known techniques and results, in other cases (for instance for the Hahn-Banach Theorem and the Brouwer Fixed Point
Theorem) completely new techniques are required. 
One can prove that the equivalence classes appearing in Theorems~\ref{thm:CN} and \ref{thm:C2N} are incomparable \cite{BG11a}.
The next equivalence class that we are going to discuss is an upper bound of both.

We first prove that the Monotone Convergence Theorem $\MCT$ is equivalent to the Fr{\'e}chet-Riesz Representation Theorem $\FRR_H$.

\begin{theorem}
\label{thm:FRR-MCT}
$\FRR_{H}\equivW\MCT$ for every computable infinite-dimensional Hilbert space $H$.
\end{theorem}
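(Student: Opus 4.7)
Fix a computable orthonormal basis $(e_n)_n$ of $H$, which exists since $H$ is a computable infinite-dimensional Hilbert space. The bridge between the two problems is the observation that for a square-summable sequence $(c_n)_n$ of reals, $y := \sum_n c_n e_n \in H$ is the representing element of the bounded linear functional $f(h) := \sum_n c_n \langle h, e_n\rangle$, and one has $\|y\|^2 = \|f\|^2 = \sum_n c_n^2$ by Parseval. Both reductions exploit this square-of-norm bookkeeping.

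For the direction $\MCT \leqW \FRR_H$, I first normalize a monotone bounded input $(x_n)_n$ to $\tilde x_n := (x_n - x_0)/(1 + x_n - x_0)$, which is computable in the input, monotone increasing, satisfies $\tilde x_0 = 0$ and takes values in $[0,1)$, and is invertible on its range. Setting $d_n := \tilde x_{n+1} - \tilde x_n \geq 0$ yields $\sum_n d_n = \sup_n \tilde x_n \leq 1$. I then construct $f(h) := \sum_n \sqrt{d_n}\,\langle h, e_n\rangle$. By Cauchy--Schwarz $\|f\| \leq 1$, so $f$ is $1$-Lipschitz and can be realized as a computable point of $\CC(H)$ uniformly in $(x_n)_n$: to evaluate $f$ on a computable $h$ within $\varepsilon$, approximate $h$ by a finite basis truncation $h'$ with $\|h - h'\| < \varepsilon/2$ and then compute the corresponding finite sum to precision $\varepsilon/2$. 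Feeding $f$ into $\FRR_H$ yields $y$ with $\|y\|^2 = \sum_n d_n = \sup_n \tilde x_n$, which is computable from $y$; inverting the normalization returns $\sup_n x_n$.

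For the direction $\FRR_H \leqW \MCT$, given a computable $f \in \dom(\FRR_H)$, compute $c_n := f(e_n)$ and form $s_N := \sum_{n \leq N} c_n^2$. This sequence is monotone increasing and bounded by $\|f\|^2 < \infty$ (the bound itself need not be computable, which is why $\MCT$ rather than a quantitative substitute is the correct target). Thus $\MCT$ returns $s = \|y\|^2$, where $y$ is the representing element. To output $y$ to precision $\varepsilon$, semi-decide $s - s_N < \varepsilon^2$ in $N$; by Parseval, $y_N := \sum_{n \leq N} c_n e_n$ satisfies $\|y - y_N\|^2 = s - s_N < \varepsilon^2$, and $y_N$ is directly computable.

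The principal obstacle is the $\MCT \leqW \FRR_H$ direction, specifically the verification that the constructed $f$ is genuinely a computable point of $\CC(H)$ uniformly in the input instance. Without the preliminary normalization into $[0,1]$ there is no a priori computable bound on $\|f\|$, and the continuity modulus one needs to produce $f$ in $\CC(H)$ evaporates; after normalization the Lipschitz bound $\|f\| \leq 1$ supplies a trivial modulus and the construction goes through. The remaining steps, namely computability of the norm on $H$, inversion of the normalization, and the Parseval-based truncation in the reverse direction, are standard in computable Hilbert-space arithmetic.
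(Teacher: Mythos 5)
Your proof is correct, and while the direction $\FRR_H\leqW\MCT$ essentially coincides with the paper's argument, your direction $\MCT\leqW\FRR_H$ takes a genuinely different and more self-contained route. The paper first invokes an external characterization (Proposition~9.1 of \cite{Bra05}, that $\MCT$ reduces to the problem of converting enumerations of sets $A\In\IN$ into characteristic functions) and then encodes such a set into a functional $f_a$ whose norm carries the characteristic function of $A$ in the even binary digits of $||a||^2$; recovering those digits needs a small additional argument since digit extraction from reals is not computable in general. You instead encode the given monotone sequence directly: after the normalization $\tilde x_n=(x_n-x_0)/(1+x_n-x_0)$ the increments $d_n$ sum to $\sup_n\tilde x_n\leq 1$, the vector $a=(\sqrt{d_n})_n$ lies in the unit ball, and $||\FRR_H(f_a)||^2$ returns the supremum outright, to be un-normalized by the computable map $t\mapsto x_0+t/(1-t)$ on $[0,1)$. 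This buys a direct reduction with no external lemma and no digit-extraction step, at the cost of having to verify uniform computability of $f_a$ as a point of $\CC(H)$, which your Lipschitz bound $||f_a||\leq 1$ handles. In the converse direction your choice of the monotone sequence $s_N=\sum_{n\leq N}f(e_n)^2$ (giving $||y||^2$ by Parseval) versus the paper's $\sup_n\max_{i\leq n}|f(x_i)|$ over a dense sequence in the unit ball (giving $||f||$) is only a cosmetic variation; both use $\MCT$ once to obtain the norm and then control the tail of $\sum_n f(e_n)e_n$. One small remark: your opening assertion that a computable orthonormal basis of $H$ exists is exactly the nontrivial fact the paper imports via \cite[Corollary~3.7]{BY06} (effective Gram--Schmidt, equivalently the computable isometry with $\ell_2$), so it deserves the same citation rather than being taken for granted.
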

\begin{proof}
Since every infinite-dimensional computable Hilbert space is computably isometrically isomorphic to $\ell_2$ by \cite[Corollary~3.7]{BY06},
if suffices to consider $H=\ell_2$. 
We first prove $\FRR_{\ell_2}\leqW\MCT$.
Given a functional $f:\ell_2\to\IR$ we need to find a $y\in\ell_2$ such that $f_y=f$ and $||y||=||f||$.
There is a computable sequence $(x_n)_n$ in $\ell_{2}$ such that $\overline{\{x_n:n\in\IN\}}$ is dense in $\{x\in \ell_2:||x||\leq1\}$.
Hence $||f||=\sup_{||x||\leq1}|f(x)|=\sup_{n\in\IN}|f(x_n)|=\sup_{n\in\IN}\max_{i\leq n}|f(x_i)|$ and hence we can compute $||f||$ with the help of $\MCT$.
Now given $f=f_y$ and $||f||$ we can easily compute $y$ be evaluating $f_y(e_n)$ on the unit vectors $e_n$.
We still need to prove $\MCT\leqW\FRR_{\ell_2}$. By \cite[Proposition~9.1]{Bra05} it suffices to show that we can utilize $\FRR_{\ell_2}$ to translate
enumerations $g$ of sets $A\In\IN$ into their characteristic functions. Let us assume that $A=\{n:n+1\in \range(g)\}$.
Without loss of generality we can assume that no value different from zero appears twice in $(g(n))_n$. 
Using the idea of \cite[Example~4.6]{BY06} we choose $a_k:=2^{g(k)-1}$ if $g(k)\not=0$ and $a_k:=0$ otherwise.
Then $a=(a_k)_k\in\ell_2$ and we can compute $f\in\CC(\ell_2)$ with $f(x):=\sum_{k=0}^\infty x_ka_k=\langle x,a\rangle$.
Now, with the help of $\FRR_{\ell_2}$ we obtain a $y\in\ell_2$ with $||y||=||f||=||a||=\sqrt{\sum_{k=0}^\infty|a_k|^2}$.
But using the number $||y||^2$ we can decide $A$, since its binary representation has in the even positions the characteristic function of $A$.
\hfill$\Box$
\end{proof}

We note that $\FRR_H$ for finite-dimensional spaces $H$ is computable. 
Altogether we obtain the following result for this equivalence class. 

\begin{theorem}[The limit]
\label{thm:lim}
The following are all Weihrauch equivalent to each other and complete for limit computable functions:
\begin{enumerate}
\item The limit map $\lim$ on Baire space \cite{Bra05}.
\item The parallelization $\widehat{\C_\IN}$ of choice on the natural numbers \cite{BG11a}.
\item The parallelization $\widehat{\BIM}$ of Banach's Inverse Mapping Theorem \cite{BG11a}.
\item The Monotone Convergence Theorem $\MCT$ \cite{BGM12}.
\item The Fr{\'e}chet-Riesz Representation Theorem $\FRR$ for $\ell_2$ (Theorem~\ref{thm:FRR-MCT}).
\item The Radon-Nikodym Theorem (Hoyrup, Rojas, Weihrauch 2012) \cite{HRW12}.
\end{enumerate}
\end{theorem}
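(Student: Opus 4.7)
The plan is to organize the proof around $\lim$ on Baire space, which is Weihrauch-complete for the class of limit computable functions essentially by definition. It then suffices to prove Weihrauch equivalence of each of the other five items with $\lim$.

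The equivalence $\widehat{\C_\IN} \equivW \lim$ is established in \cite{BG11a}; intuitively, convergence in $\Baire$ is coordinatewise eventual stabilization, so computing a limit reduces to solving one $\C_\IN$ instance per coordinate, while conversely a parallel family of $\C_\IN$ instances can be driven by maintaining in each coordinate the least candidate not yet excluded and taking the (componentwise eventually constant) limit of the resulting sequence in $\Baire$. The equivalence $\widehat{\BIM} \equivW \widehat{\C_\IN}$ then follows immediately by applying monotonicity of parallelization in the Weihrauch lattice to $\BIM_{\ell_2,\ell_2} \equivW \C_\IN$ from Theorem~\ref{thm:CN}. The equivalence $\FRR_{\ell_2} \equivW \MCT$ is Theorem~\ref{thm:FRR-MCT}, and for the Radon-Nikodym Theorem I would appeal directly to the proof of Hoyrup, Rojas and Weihrauch in \cite{HRW12}.

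The main obstacle is the equivalence $\MCT \equivW \lim$. The direction $\MCT \leqW \lim$ is routine: a monotone bounded real sequence converges to its supremum, and pointwise real convergence reduces easily to $\lim$ on $\Baire$ through rapidly converging Cauchy names. The delicate direction is $\lim \leqW \MCT$, since a single use of $\MCT$ produces only one real whereas the limit of a sequence in $\Baire$ carries countably many natural number coordinates. My strategy is to route through the already established $\lim \equivW \widehat{\C_\IN}$ and then to prove $\widehat{\C_\IN} \leqW \MCT$ by packing the parallel family into one monotone bounded real. Concretely, if $c_n(k)$ denotes the least natural not enumerated into the $n$-th excluded set by stage $k$, then $k \mapsto c_n(k)$ is nondecreasing and eventually constant, so a master sequence of the form $y_k = \sum_{n=0}^\infty 2^{-n-1}\,\varphi(c_n(k))$ for a computable monotone embedding $\varphi$ of $\IN$ into $[0,1)$ is monotone in $k$ and bounded, and $\MCT$ delivers its supremum.

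The principal difficulty, and the step I expect to consume most of the technical effort, is to choose $\varphi$ together with the weights so that $\sup_k y_k$ admits a computable coordinatewise decoding recovering every stabilized value $c_n$. A plain geometric weighting does not suffice, because the unbounded range of the individual $c_n$ can cause their contributions to mix into overlapping dyadic slots. I would address this by working not with a single $\varphi$ but with per-coordinate rescalings whose ranges are preassigned disjoint geometric intervals derived from a computable a priori bound extracted from a suitable reformulation of the $\C_\IN$ input; the monotonicity of each $c_n(k)$ is then preserved under these rescalings, and the supremum's dyadic expansion separates cleanly into coordinate blocks from which each $c_n$ can be read off effectively.
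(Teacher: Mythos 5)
Your overall decomposition coincides with how the paper assembles this theorem: items (1)--(3) and (6) are delegated to \cite{Bra05}, \cite{BG11a} and \cite{HRW12}, item (5) is Theorem~\ref{thm:FRR-MCT}, and deriving $\widehat{\BIM}\equivW\widehat{\C_\IN}$ from Theorem~\ref{thm:CN} by monotonicity of parallelization is exactly right. The one place where you depart from the paper is that you attempt to prove $\MCT\equivW\lim$ yourself, whereas the paper simply cites \cite{BGM12}; and it is precisely there that your argument has a genuine gap.

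The problem is the step $\widehat{\C_\IN}\leqW\MCT$. Your plan to keep the per-coordinate contributions in disjoint dyadic blocks rests on ``a computable a priori bound extracted from a suitable reformulation of the $\C_\IN$ input''. No such bound can exist: a bound on the solutions of a $\C_\IN$ instance, computable uniformly from the instance, would reduce $\C_\IN$ to choice on a finite set and hence make it computable with a bounded number of mind changes, which contradicts its completeness for the class of functions computable with finitely many mind changes (Theorem~\ref{thm:CN}). The limits $c_n=\lim_k c_n(k)$ are genuinely unbounded, so no effective preassignment of interval lengths is possible. The repair is to encode not the values $c_n(k)$ but their downward closures: let $S_k:=\{\langle n,j\rangle: j<c_n(k)\}$, which is a nested increasing sequence of subsets of $\IN$ because each $c_n(k)$ is nondecreasing in $k$, and set $y_k:=\sum_{i\in S_k}4^{-i}$. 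Then $(y_k)_k$ is computable, monotone and bounded, and from $\sup_k y_k$ the set $S=\bigcup_k S_k$ and hence every $c_n$ can be decoded, since base-$4$ expansions with digits restricted to $\{0,1\}$ leave gaps that make digit extraction computable --- the same device the paper uses in the proof of Theorem~\ref{thm:FRR-MCT}, where the characteristic function is read off the even binary positions of $||y||^2$. Equivalently, and closer to the paper's own style, one can invoke \cite[Proposition~9.1]{Bra05} and merely show that $\MCT$ translates enumerations of sets $A\In\IN$ into their characteristic functions via the Specker sum $\sum_{i\in A}4^{-i}$, which sidesteps the unbounded-output issue entirely. With that replacement your argument goes through.
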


This theorem implies Propositions~\ref{prop:MCT}, \ref{prop:BIM3} and \ref{prop:FRR}. 
 Finally, we mention that we also have a concept of a {\em jump} $f':\In X\mto Y$ for every $f:\In X\mto Y$,
which essentially replaces the input representation of $X$ in such a way that a name of $x\in X$ for $f'$ is a sequence that converges to a name in the sense of $f$.
This makes problems potentially more complicated since less input information is available.
It allows us to phrase results as the following \cite{BGM12}.

\begin{theorem}[B., Gherardi and Marcone 2012]
$\WKL'\equivW\BWT_\IR$.
\end{theorem}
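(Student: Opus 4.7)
The plan is to prove both directions of the Weihrauch equivalence separately. The key intuition is that an input to $\WKL'$ is an infinite binary tree revealed only by a pointwise-convergent sequence of approximations of its characteristic function, and this ``one step of resolving limits'' matches the computational strength of finding a cluster point of a bounded real sequence.

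For $\BWT_\IR \leqW \WKL'$, I would rescale the input so that $(x_n)_n \in [0,1]^\IN$, and define the tree $T \subseteq 2^{<\IN}$ by placing $\sigma$ in $T$ iff infinitely many terms of $(x_n)_n$ fall in the closed dyadic interval $I_\sigma = [0.\sigma,\,0.\sigma + 2^{-|\sigma|}]$. By pigeonhole $T$ is infinite, and a routine argument shows that the map $p \mapsto 0.p$ sends $[T]$ onto the set of cluster points of $(x_n)_n$. The tree $T$ itself is not computable from $(x_n)_n$, but its characteristic function is exhibited as the pointwise limit of a computable sequence $\chi_{T_k}$ by declaring $\chi_{T_k}(\sigma) = 1$ at stage $k$ iff at least $|\sigma|+1$ of $x_0,\ldots,x_k$ have been confirmed, to precision $2^{-k}$, to lie in a slightly enlarged interior of $I_\sigma$. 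This furnishes an input to $\WKL'$ from which a path $p$ can be extracted, and $0.p$ is then returned as a cluster point of $(x_n)_n$.

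For $\WKL' \leqW \BWT_\IR$, given a sequence $(T_k)_k$ with $\chi_{T_k} \to \chi_T$ pointwise, I would pass to the prefix-closed cleanup $\tilde T_k := \{\sigma : \chi_{T_k}(\tau) = 1 \text{ for every prefix } \tau \sqsubseteq \sigma\}$ and build a bounded sequence $(y_n)_n$ in $[0,1]$ whose cluster points correspond to infinite paths through $T$. The natural recipe is to list, at stage $n$, the rationals $0.\sigma$ for $\sigma \in \tilde T_n$ of length up to $n$, filtered by a growing stability criterion that suppresses transient outputs. If $p \in [T]$, its prefixes eventually stabilize in $\tilde T_k$, so the outputs $0.p|_m$ become persistent and $0.p$ is a cluster point; a cluster point $x$ returned by $\BWT_\IR$ is then decoded by a K\H{o}nig-Lemma-style compactness argument into a path $p \in [T]$ with $0.p = x$.

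The main obstacle, and the heart of the argument, lies in the second direction: transient strings may temporarily belong to some $\tilde T_n$ and contribute outputs that accumulate to spurious cluster points not corresponding to any path in the true limit $T$. The fix is to introduce a stability schedule — output $0.\sigma$ only when $\sigma$ has been in $\tilde T_k$ consistently for a growing window of stages — and to argue, via a K\H{o}nig-style compactness argument applied to the tree of stable approximants around a given cluster point, that every cluster point decodes to a bona fide path. Careful treatment of dyadic rationals, which admit two binary expansions, is required so that this decoding can be performed uniformly and computably from a name of the cluster point together with the approximation sequence $(T_k)_k$.
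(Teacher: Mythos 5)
The paper states this theorem without proof, citing Brattka, Gherardi and Marcone (2012); their argument runs through closed choice (essentially $\BWT_\IR\equivW\C_\IR'$, $\WKL\equivW\C_{2^\IN}\equivW\C_{[0,1]}$, and monotonicity of the jump under strong reductions), so your direct construction is in any case a different route. Your first direction, $\BWT_\IR\leqW\WKL'$, is essentially sound: replacing ``infinitely many terms in $I_\sigma$'' (a $\Pi^0_2$ condition, too complex to realize as a pointwise limit of computable trees) by ``at least $|\sigma|+1$ terms in a slightly enlarged $I_\sigma$'' is exactly the right move, since the growing counts along a path still force its value to be a cluster point while the limiting tree becomes a pointwise limit of uniformly computable approximations. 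Two small repairs are needed there: make ``confirmed by stage $k$'' monotone in $k$ so that the approximating bits actually converge, and do not rescale affinely into $[0,1]$ (no bound on the sequence is part of the input, and none is computable from it); instead push the sequence through a fixed computable homeomorphism of $\IR$ onto $(0,1)$.

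The second direction has two genuine gaps. First, your sequence lists $0.\sigma$ for \emph{all} $\sigma\in\tilde T_n$ of length up to $n$, so every fixed $\sigma\in T$ is emitted at cofinitely many stages and $0.\sigma$ is itself a cluster point; these spurious cluster points come from maximally \emph{stable} strings, so a stability schedule, which suppresses transient strings, attacks the wrong enemy, and such a $0.\sigma$ need not equal $0.p$ for any $p\in[T]$. The repair is to emit at stage $n$ only one witness, say the leftmost element of $\tilde T_n$ of maximal length $\leq n$: since $T$ is infinite these lengths tend to infinity, and any cluster point then has, for each $m$, a fixed length-$m$ prefix lying in $\tilde T_n$ for infinitely many $n$, hence in $T$ by pointwise convergence --- no compactness argument or stability bookkeeping is needed. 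Second, decoding a cluster point $x$ into a $p$ with $0.p=x$ is not computable at dyadic rationals, and this is not a technicality that ``careful treatment'' can dissolve: if $x=1/2$ is returned, every finite portion of the names of $x$ and of $(T_k)_k$ is consistent both with $T$ having only paths beginning $01\ldots$ and with $T$ having only paths beginning $10\ldots$, so the post-processor can never commit to the first output bit. The standard fix is to abandon $p\mapsto 0.p$ in favour of the middle-thirds embedding $p\mapsto\sum_{i}2p_i3^{-i-1}$, a computable homeomorphism of $2^\IN$ onto the Cantor set with computable inverse on its image; with that change and the one-witness-per-stage sequence, your second direction goes through.
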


This result does not only imply Proposition~\ref{prop:BWT2}, but also the following \cite{BGM12}.

\begin{corollary}[B., Gherardi and Marcone 2012]
Every computable sequence $(x_n)_n$ in the unit cube $[0,1]^n$ has a cluster
point $x\in[0,1]^n$ that is low relative to the halting problem.
\end{corollary}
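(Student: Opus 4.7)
The plan is to combine the theorem $\WKL'\equivW\BWT_\IR$ with a relativization of the Low Basis Theorem (Theorem~\ref{thm:LBT}). The one-dimensional case is the main step, and the multidimensional case will be handled by a straightforward coordinate-by-coordinate refinement.

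First I would deal with the one-dimensional case $n=1$. Given a computable sequence $(x_n)_n$ in $[0,1]$, the equivalence $\BWT_\IR\leqW\WKL'$ produces an infinite binary tree $T$ via the input reduction, together with an output reduction converting any infinite path of $T$ into a cluster point of $(x_n)_n$. Unpacking the definition of the jump, a ``name'' for the $\WKL'$-instance is a sequence that converges to an actual $\WKL$-name of $T$; for a computable $(x_n)_n$ such a converging sequence is itself computable, hence $T$ is computable from the halting problem $K$, and the output reduction similarly uses no more than $K$ as oracle. By the Low Basis Theorem relativized to $K$, the $K$-computable tree $T$ has an infinite path $p$ with $(p\oplus K)'\equivT K'$. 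The corresponding cluster point $x$ is then computable from $p\oplus K$, so $(x\oplus K)'\leqT(p\oplus K)'\equivT K'$, which is precisely the statement that $x$ is low relative to the halting problem.

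For $n\geq 2$ I would extract a cluster point coordinate by coordinate. Apply the above to the first-coordinate sequence to obtain a cluster point $y_1\in[0,1]$ low relative to $K$; then, using $y_1$ as an additional oracle, pass computably to a subsequence whose first coordinate converges to $y_1$, and apply the same construction to the second-coordinate projection of this subsequence, obtaining a $y_2$ low relative to $K\oplus y_1$, and so on. Since each $y_i$ is low relative to $K\oplus y_1\oplus\cdots\oplus y_{i-1}$ and lowness composes along finite iterations, the resulting point $x=(y_1,\ldots,y_n)$ is low relative to $K$ and is a cluster point of $(x_n)_n$ in $[0,1]^n$.

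The main obstacle is justifying that the translation carried out by the Weihrauch reduction $\BWT_\IR\leqW\WKL'$ is degree-theoretically faithful, i.e., that for a computable instance the associated tree $T$ and the output map really are $K$-computable. This is the standard unpacking of the jump operator in the Weihrauch lattice: a computable name for $f'$ is exactly a computable sequence of approximate names converging to an $f$-name, and any such converging sequence admits a $\lim$-computable limit that is precisely $K$-computable. Once this point is granted, the remainder of the argument is a direct application of the relativized Low Basis Theorem.
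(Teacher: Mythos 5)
Your one-dimensional argument is exactly the intended derivation: unpack the jump so that a computable $\BWT$-instance is translated into a $\emptyset'$-computable infinite binary tree, apply the Low Basis Theorem relativized to $\emptyset'$ to obtain a path $p$ with $(p\oplus\emptyset')'\leqT\emptyset''$, and push $p$ through the computable outer reduction to get a cluster point $x$ with $x'\leqT\emptyset''$. That part is correct and is how the cited source proceeds.

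The coordinate-by-coordinate extension to $n\geq2$ has a genuine gap: it loses a jump at every coordinate. The subsequence whose first coordinates converge to $y_1$ is computable from $y_1$, so when you feed its second-coordinate projection into the reduction $\BWT_\IR\leqW\WKL'$, the resulting tree $T_2$ is only the limit of a $y_1$-computable sequence of approximate names, hence computable from $y_1'$ --- not from $y_1\oplus\emptyset'$. Lowness of $y_1$ over $\emptyset'$ only gives $y_1'\leqT(y_1\oplus\emptyset')'\leqT\emptyset''$, so $T_2$ is merely a $\emptyset''$-computable tree, and the Low Basis Theorem relativized to the weakest oracle you can actually name for it yields only $(p_2\oplus\emptyset'')'\leqT\emptyset'''$, i.e., $y_2'\leqT\emptyset'''$. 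Nothing in your argument prevents $T_2$ from coding $\emptyset''$ into every path, so the assertion that ``$y_2$ is low relative to $K\oplus y_1$'' is never established, and the composition step you invoke has no valid premise to compose. The repair is not to iterate at all: \cite{BGM12} proves $\BWT_{\IR^n}\equivW\BWT_\IR\equivW\WKL'$ for every $n$ (indeed $\BWT_X\equivW\WKL'$ for computably compact computable metric spaces such as $[0,1]^n$), so the entire $n$-dimensional computable instance is translated at once into a single $\emptyset'$-computable tree, and your one-dimensional argument then applies verbatim.
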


Here $x$ is {\em low relative to the halting problem} if $x'\leqT\emptyset ''$ (some authors would only call this a partial relativization of lowness).
In light of Proposition~\ref{prop:BWT2} this is one of the strongest positive properties that one can expect for a cluster point. 
These examples demonstrate that a classification of the Weihrauch degree of a theorem yields a large variety of computability properties of the theorem,
uniform and non-uniform ones on the one hand, and positive and negative ones on the other hand.

\bibliographystyle{splncs03}
\bibliography{C:/Users/Vasco/Dropbox/Bibliography/lit}

\end{document}